\newtheorem{theorem}{Theorem}[section]
\newtheorem{corollary}[theorem]{Corollary}
\newtheorem{lemma}[theorem]{Lemma}
\newtheorem{proposition}[theorem]{Proposition}
\newtheorem*{theorem*}{Theorem}
\newtheorem*{lemma*}{Lemma}
\newtheorem*{remark*}{Remark}
\newtheorem*{definition*}{Definition}
\newtheorem*{proposition*}{Proposition}
\newtheorem*{corollary*}{Corollary}
\numberwithin{equation}{section}
\newcommand{\real}{\mathbb{R}}
\let\ced=\c         
\def\qed{\,\unskip\kern 6pt \penalty 500
\raise -2pt\hbox{\vrule \vbox to8pt{\hrule width 6pt
\vfill\hrule}\vrule}\par}
\definecolor{darkblue}{rgb}{0.05, .05, .65}
\definecolor{darkgreen}{rgb}{0.1, .65, .1}
\definecolor{darkred}{rgb}{0.8,0,0}
\newcommand{\beqn}{\begin{equation}}
\newcommand{\eeqn}{\end{equation}}
\newcommand{\bear}{\begin{eqnarray}}
\newcommand{\eear}{\end{eqnarray}}
\newcommand{\bean}{\begin{eqnarray*}}
\newcommand{\eean}{\end{eqnarray*}}
\begin{document}

\title{\huge \bf Eternal solutions to a porous medium equation with strong nonhomogeneous absorption. Part I: Radially non-increasing profiles}

\author{
\Large Razvan Gabriel Iagar\,\footnote{Departamento de Matem\'{a}tica
Aplicada, Ciencia e Ingenieria de los Materiales y Tecnologia
Electr\'onica, Universidad Rey Juan Carlos, M\'{o}stoles,
28933, Madrid, Spain, \textit{e-mail:} razvan.iagar@urjc.es}
\\[4pt] \Large Philippe Lauren\ced{c}ot\,\footnote{Laboratoire de Math\'ematiques (LAMA) UMR~5127, Universit\'e Savoie Mont Blanc, CNRS, F--73000 Chamb\'ery, France. \textit{e-mail:} philippe.laurencot@univ-smb.fr}\\
\\[4pt]}
\date{}
\maketitle

\begin{abstract}
Existence of specific \emph{eternal solutions} in exponential self-similar form to the following quasilinear diffusion equation with strong absorption
$$
\partial_t u=\Delta u^m-|x|^{\sigma}u^q,
$$
posed for $(t,x)\in(0,\infty)\times\real^N$, with $m>1$, $q\in(0,1)$ and $\sigma=\sigma_c:=2(1-q)/(m-1)$ is proved. Looking for radially symmetric solutions of the form
$$
u(t,x)=e^{-\alpha t}f(|x|e^{\beta t}), \qquad \alpha=\frac{2}{m-1}\beta,
$$
we show that there exists a unique exponent $\beta^*\in(0,\infty)$ for which there exists a one-parameter family $(u_A)_{A>0}$ of solutions with compactly supported and non-increasing profiles $(f_A)_{A>0}$ satisfying $f_A(0)=A$ and $f_A'(0)=0$. An important feature of these solutions is that they are bounded and do not vanish in finite time, a phenomenon which is known to take place for all non-negative bounded solutions when $\sigma\in (0,\sigma_c)$.
\end{abstract}

\smallskip

\noindent {\bf MSC Subject Classification 2020:} 35C06, 34D05, 35A24, 35B33, 35K65.

\smallskip

\noindent {\bf Keywords and phrases:} porous medium equation, spatially inhomogeneous absorption, eternal solutions, exponential self-similarity, global solutions.

\section{Introduction and main results}

The goal of the present paper (and also of its second part \cite{ILS23}) is to address the problem of existence and classification of some specific solutions to the following porous medium equation with strong absorption
\begin{equation}\label{eq1}
\partial_t u-\Delta u^m+|x|^{\sigma}u^q=0, \qquad (t,x)\in(0,\infty)\times\real^N,
\end{equation}
in the range of exponents
\begin{equation}\label{range.exp}
m>1, \qquad q\in(0,1), \qquad \sigma=\sigma_c :=\frac{2(1-q)}{m-1}.
\end{equation}
On the one hand, Eq.~\eqref{eq1} features, in the range of exponents given in \eqref{range.exp}, a competition between the degenerate diffusion term, which tends to conserve the total mass of the solutions while expanding their supports, and the absorption term which leads to a loss of mass. As it has been established and will be explained below, absorption becomes stronger as its exponent $q$ decreases and dominant in the range we are dealing with, leading to specific, although sometimes surprising phenomena such as finite time extinction, instantaneous shrinking and localization of the supports of the solutions. On the other hand, the weight $|x|^{\sigma}$ with $\sigma>0$ affects the absorption in the sense of enhancing its effect over regions far away from the origin, where $|x|$ is large, while reducing its strength near $x=0$, where $|x|^{\sigma}$ is almost zero (and formally there is even no absorption at $x=0$).

The balance between these two effects has been best understood in the spatially homogeneous case $\sigma=0$ of Eq.~\eqref{eq1}. A lot of development has been done several decades ago in the range $q>m>1$ where the diffusion is strong and the absorption is not leading the dynamics of the equations, see for example \cite{KP86, PT86, KU87, KV88, KPV89, Le97, Kwak98} and references therein. In this range, the previous knowledge of the porous medium equation and its self-similar behavior had a strong influence in developing the theory. The intermediate range $1<q\leq m$ is not yet totally understood in higher space dimensions. In dimension $N=1$ it has been shown that solutions are global in time but their supports \emph{are localized} if the initial condition is compactly supported; that is, there exists a radius $R>0$ not depending on time such that ${\rm supp}\,u(t)\subseteq B(0,R)$ for any $t>0$. Self-similar solutions might become unbounded \cite{MPV91, CVW97} and thus a delicate analysis of the large time behavior, involving the formation of boundary layers, is needed, see \cite{CV99}. Such descriptions are still lacking in dimension $N\geq2$.

More related to our study, still assuming that $\sigma=0$, the range $q\in(0,1)$ is the most striking one, where the absorption term dominates  the diffusion and leads to two new mathematical phenomena. On the one hand, the \emph{finite time extinction} stemming from the ordinary differential equation $\partial_t u=-u^q$ obtained by neglecting the diffusion has been established by Kalashnikov \cite{Ka74, Ka84}, emphasizing the dominance of the absorption term. On the other hand, \emph{instantaneous shrinking of supports} of solutions to Eq.~\eqref{eq1} (with $\sigma=0$) emanating from a bounded initial condition $u_0$ such that $u_0(x)\to 0$ as $|x|\to\infty$ takes place, that is, for any non-negative initial condition $u_0\in L^{\infty}(\real^N)$ such that $u_0(x)\to0$ as $|x|\to\infty$ and $\tau>0$, there is $R(\tau)>0$ such that ${\rm supp}\,u(t)\subseteq B(0,R(\tau))$ for all $t\ge\tau$. This rather unexpected behavior is once more due to the strength of the absorption, which involves a very quick loss of mass and has been proved in \cite{Abd98} after borrowing ideas from previous works \cite{EK79, Ka84} devoted to the semilinear case. Finer properties of the dynamics of Eq.~\eqref{eq1} for $\sigma=0$ in this range, such as the behavior near the extinction time or even the extinction rates, are still lacking in a number of cases and seem (up to our knowledge) to be available only when $m+q=2$ in \cite{GV94}, revealing a case of asymptotic simplification. Completing this picture with the cases when $m+q\neq2$ appears to be a rather complicated open problem.

Drawing our attention now to the spatially inhomogeneous Eq.~\eqref{eq1} when $\sigma>0$, recent results have shown that the magnitude of $\sigma$ has a very strong influence on the dynamics of Eq.~\eqref{eq1} and, in some cases, the weight actually allows for a better understanding of the dynamics. More precisely, the analysis performed by Belaud and collaborators \cite{Belaud01, BD10, BeSh22}, along with the instantaneous shrinking of supports for bounded solutions to Eq.~\eqref{eq1} proved in \cite{ILS22}, shows that, for $0<\sigma<\sigma_c$, \emph{any non-negative solution} to Eq.~\eqref{eq1} with bounded initial condition \emph{vanishes in finite time}. A more direct proof of this result is given by the authors in the recent short note \cite{IL22}. On the contrary, after developing the general theory of well-posedness for Eq.~\eqref{eq1}, we have focused on the range $\sigma>\sigma_c$ in our previous work \cite{ILS22} and proved that, in the latter, finite time extinction \emph{depends strongly on how concentrated} is the initial condition in a neighborhood of the origin. More precisely, initial data which are positive in a ball $B(0,\delta)$ give rise to solutions with a non-empty positivity set for all times,
	\begin{equation}
		\{ x\in\mathbb{R}^N\ :\ u(t,x)>0\} \ne \emptyset \;\;\text{for all}\;\; t>0, \label{ppos}
	\end{equation}
when $\sigma>\sigma_c$, while initial data which vanish in a suitable way as $x\to 0$ and with a sufficiently small $L^{\infty}$ norm lead to solutions vanishing in finite time, as proved in \cite{IL22} where optimal conditions are given. All these cases of different dynamics are consequences of the two types of competitions explained in the previous paragraphs.

The exponent $\sigma_c=2(1-q)/(m-1)$ thus appears to separate the onset of extinction in finite time for arbitrary non-negative and bounded initial conditions which occurs for lower values of $\sigma$ and the positivity property~\eqref{ppos} which is known to take place for higher values of $\sigma$, in particular for initial conditions which are positive in a ball $B(0,\delta)$. According to \cite{IL22}, when $\sigma=\sigma_c$, there are non-negative solutions to Eq.~\eqref{eq1} vanishing in finite time, their initial conditions having a sufficiently small $L^\infty$-norm and decaying to zero in a suitable way as $x\to 0$, and the issue we address here is whether the positivity property~\eqref{ppos} also holds true for some solutions to Eq.~\eqref{eq1} when $\sigma=\sigma_c$. We actually construct specific solutions to Eq.~\eqref{eq1} with $\sigma=\sigma_c$ featuring this property and these solutions turn out to have an exponential self-similar form as explained in detail below. In particular, they are defined for all $t\in\mathbb{R}$.

\medskip

\noindent \textbf{Main results}. We are looking in this paper for some special solutions to Eq.~\eqref{eq1} with $m$, $q$ and $\sigma=\sigma_c$ as in \eqref{range.exp} having an \emph{exponential self-similar form}, that is,
\begin{equation}\label{SSS}
u(t,x)=e^{-\alpha t}f(|x|e^{\beta t}), \qquad (t,x)\in(0,\infty)\times\real^N.
\end{equation}
Notice that solutions as in~\eqref{SSS} are actually defined for all $t\in(-\infty,\infty)$; that is, they are not only global in time but \emph{eternal}. Even if solutions of the form~\eqref{SSS} are rather unexpected for parabolic equations due to the irreversibility of time, several equations are known to have such solutions but usually in critical cases separating different behaviors. Parabolic equations featuring this property include the two-dimensional Ricci flow \cite{DS06, Ham1993}, the fast diffusion equation with critical exponent $m_c=(N-2)/N$ in space dimension $N\geq 3$ \cite{GPV00}, a viscous Hamilton-Jacobi equation featuring singular diffusion of $p$-Laplacian type, $p\in (2N/(N+1),2)$, and critical gradient absorption \cite{IL13}, and the related reaction-diffusion equation $\partial_t u - \Delta u^m - |x|^\sigma u^q=0$ \cite{IS22a, IS22b}. Concerning the latter, the critical value of $\sigma$ is exactly the same as in \eqref{range.exp}, but the dynamic properties of the solutions strongly differ from the present work, since the spatially inhomogeneous part is there a source term, introducing mass to the equation. Eternal solutions are also available for kinetic equations, such as the spatially homogeneous Boltzmann equation for Maxwell molecules \cite{BoCe2002, Cab1998} or Smoluchowski's coagulation equation with coagulation kernel of homogeneity one \cite{Ber2002, BNV2019}. Let us finally mention that, besides solutions of the form~\eqref{SSS}, another important class of self-similar eternal solutions of evolution problems is that of traveling wave solutions of the form $(t,x)\mapsto u(x-ct)$ in space dimension $N=1$, which are available for scalar conservation laws and parabolic equations such as the celebrated Fisher-KPP equation, see \cite{CaCo2003, GK2004, Ser2004} and the references therein.

Returning to the ansatz~\eqref{SSS}, setting $\xi=|x|e^{\beta t}$ and performing some direct calculations, we readily find that the self-similar exponents must satisfy the condition
\begin{equation}\label{SS.exp}
\alpha=\frac{2}{m-1}\beta,
\end{equation}
where $\beta$ becomes a free parameter for our analysis, while the profile $f$ solves the differential equation
\begin{equation}\label{ODE}
(f^m)''(\xi)+\frac{N-1}{\xi}(f^m)'(\xi)+\alpha f(\xi)-\beta\xi f'(\xi)-\xi^{\sigma}f^q(\xi)=0, \qquad \xi>0.
\end{equation}
The solutions to Eq.~\eqref{ODE} we are looking for in this first part of a two-parts work are solutions taking positive values at $\xi=0$. To this end, let us observe that we can fix, without loss of generality, the initial condition as
\begin{equation}\label{initcond1}
f(0)=1, \qquad f'(0)=0.
\end{equation}
Indeed, given $a>0$ and a solution $f$ to~\eqref{ODE}-\eqref{initcond1}, we can readily obtain by direct calculations that the rescaled function
\begin{equation}\label{resc}
g(\xi;a)=af(a^{-(m-1)/2}\xi)
\end{equation}
solves \eqref{ODE} with initial conditions $g(0;a)=a$, $g'(0;a)=0$. This leaves us with the task of solving the Cauchy problem \eqref{ODE}-\eqref{initcond1}, which is performed in the next result.

\begin{theorem}\label{th.1}
Let $m$, $q$ and $\sigma=\sigma_c$ as in \eqref{range.exp}. There exists a unique exponent $\beta^*$ (and corresponding $\alpha^*=2\beta^*/(m-1)$) such that, for $\alpha=\alpha^*$ and $\beta=\beta^*$, the Cauchy problem \eqref{ODE}-\eqref{initcond1} has a compactly supported and non-negative solution $f^*$. The function $U^*$ defined by
$$
U^*(t,x)=e^{-\alpha^*t}f^*(|x|e^{\beta^*t}), \qquad (t,x)\in\mathbb{R}\times \mathbb{R}^N,
$$
is then a self-similar solution to Eq.~\eqref{eq1} in exponential form~\eqref{SSS}.
\end{theorem}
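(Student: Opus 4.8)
The natural strategy is a shooting argument in the parameter $\beta>0$, with $\alpha=2\beta/(m-1)$ slaved to it through \eqref{SS.exp}. First I would settle the local Cauchy theory for \eqref{ODE}-\eqref{initcond1}. Since $\xi=0$ is singular both because of the factor $(N-1)/\xi$ and because the diffusion $(f^m)''$ is degenerate, I would desingularise near the origin and look for the solution in the form $f(\xi)=1-\frac{\alpha}{2mN}\xi^2+o(\xi^2)$, the quadratic coefficient being forced by balancing the diffusion with the term $-\alpha f$ at leading order (the weight $\xi^\sigma$ contributing only at the higher order $\xi^{\sigma+2}$ because $\sigma>0$). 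A fixed-point argument on the integrated equation then yields, for each $\beta>0$, a unique local solution $f(\cdot;\beta)$ which is decreasing and concave near $\xi=0$ and depends continuously, and in fact monotonically (see below), on $\beta$; I would then continue it to a maximal interval.

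The core of the argument is to classify the global behaviour of $f(\cdot;\beta)$ and to split the parameter range into the two sets
\[
\mathcal{A}=\{\beta>0:\ f(\cdot;\beta)\ \text{vanishes at some finite}\ \xi_0(\beta)\ \text{with}\ f'(\xi_0;\beta)<0\},
\]
\[
\mathcal{B}=\{\beta>0:\ f(\xi;\beta)>0\ \text{on its whole existence interval}\}.
\]
Using continuous dependence together with the transversality $f'(\xi_0)<0$ (for $\mathcal{A}$) and a quantitative lower bound (for $\mathcal{B}$), I would show that both sets are open. They are non-empty at the extremes: for $\beta$ small, $\alpha$ is small as well, the downward push $-\alpha f$ becomes negligible and the subharmonicity induced by $\xi^\sigma f^q$ keeps $f$ bounded away from zero, so $\beta\in\mathcal{B}$; for $\beta$ large, the reinforcing effect of $-\alpha f$ and of the transport term $-\beta\xi f'$ (with $f'<0$) makes $f^m$ strongly concave and drives $f$ across zero at a finite $\xi_0$ with negative slope, so $\beta\in\mathcal{A}$. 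Each assertion I would establish by constructing an explicit super- or sub-solution and invoking the comparison principle for the associated second-order operator.

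Setting $\beta^*=\sup\mathcal{B}$, openness of both sets gives $\beta^*\in(0,\infty)$ and $\beta^*\notin\mathcal{A}\cup\mathcal{B}$. Hence the profile $f^*:=f(\cdot;\beta^*)$ is the borderline case: positive and decreasing on $(0,\xi_0^*)$, it reaches the value $0$ at a finite point $\xi_0^*$ and does so with $f'(\xi_0^*)=0$. Since then $(f^{*m})'(\xi_0^*)=m(f^*)^{m-1}(\xi_0^*)(f^*)'(\xi_0^*)=0$, extending $f^*$ by $0$ for $\xi\ge\xi_0^*$ produces a compactly supported, non-negative function whose flux $(f^{*m})'$ is continuous and which is a genuine weak solution of \eqref{ODE}, so that the associated $U^*$ solves \eqref{eq1}. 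For uniqueness of $\beta^*$ I would prove a strict monotone ordering of the trajectories: for $\beta_1<\beta_2$ the sign structure of the $\beta$-dependent terms yields $f(\cdot;\beta_2)\le f(\cdot;\beta_1)$ as long as both remain positive and decreasing, whence $\mathcal{A}$ is an up-set and $\mathcal{B}$ a down-set; consequently $\mathcal{B}=(0,\beta^*)$, $\mathcal{A}=(\beta^*,\infty)$, and $\beta^*$ is the unique separating value.

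The step I expect to be the main obstacle is precisely this rigorous global dichotomy together with the monotone ordering. Two points are delicate. First, controlling the far field of the positive solutions in $\mathcal{B}$, ruling out finite-$\xi$ singularities and, more importantly, excluding a turning point at which a decreasing solution would bounce back and become non-monotone; this is needed both for openness of $\mathcal{B}$ and to guarantee that $f^*$ is genuinely non-increasing up to $\xi_0^*$. Second, establishing the comparison and monotonicity in $\beta$ for the degenerate operator, whose transport term $-\beta\xi f'$ depends on the first derivative and obstructs a naive use of the maximum principle. Recasting \eqref{ODE} as an autonomous planar system through the scaling invariance available exactly at $\sigma=\sigma_c$ (the invariant variable $\xi^{-2/(m-1)}f$ together with $s=\ln\xi$) would provide a phase-plane picture, with critical points describing the behaviour at $\xi=0$ and at the edge of the support, in which the dichotomy becomes the question of which orbit the unstable manifold of the origin connects to, and in which the non-crossing of orbits delivers the required monotonicity cleanly.
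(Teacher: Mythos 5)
Your existence skeleton (shooting in $\beta$ with $\alpha$ slaved by \eqref{SS.exp}, normalization of $f(0)$ via the scaling \eqref{resc}, borderline value trapped between two open sets, tangential contact and extension by zero as in Lemma~\ref{lem.ext}) matches the paper's strategy, and your local expansion $f(\xi)=1-\frac{\alpha}{2mN}\xi^2+o(\xi^2)$ agrees with \eqref{init.sec.der}. But your uniqueness argument has a genuine gap: the monotone ordering $f(\cdot;\beta_2)<f(\cdot;\beta_1)$ for $\beta_1<\beta_2$ (which is true, and proved in Lemma~\ref{lem.monot} by an optimal-barrier rescaling in the spirit of Ye--Yin, not by a direct maximum principle, exactly because of the obstruction you flagged) does \emph{not} make $\mathcal{A}$ an up-set or force the good set to be a single point. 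Two values $\beta_1<\beta_2$ could both produce compactly supported profiles with tangential contact, one nested strictly inside the other with $\xi_0(\beta_2)<\xi_0(\beta_1)$; the ordering yields no contradiction. Nor does ordering show that a $\beta_2>\beta_1\in\mathcal{A}$ again crosses zero transversally: the smaller profile may touch down tangentially earlier. This is precisely why the paper's uniqueness proof is the hard part: assuming two elements of $\mathcal{B}$, it rescales $F_2$ by the invariance \eqref{resc2b}, forces the optimal contact to occur \emph{at the interface} \eqref{interm9}, and then derives a contradiction from the interface asymptotics, whose coefficients $K_3(\beta)$ and $K_2(\beta)$ are strictly decreasing in $\beta$ when $m+q\ge 2$ (Proposition~\ref{prop.interf}); when $m+q<2$ the leading coefficient $K_1$ is $\beta$-independent, and a \emph{second-order} expansion with $\beta$-dependent coefficient $K_0(\beta)$ in \eqref{interm10} must be extracted via a three-dimensional quadratic autonomous system and the stable manifold theorem (Proposition~\ref{prop.order2}). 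Your concluding suggestion of a planar phase portrait with ``non-crossing orbits'' cannot replace this: $\beta$ enters the vector field and cannot be scaled out at $\sigma=\sigma_c$, so profiles for different $\beta$ live in different phase portraits and their graphs may certainly cross.

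A secondary problem is your choice of $\mathcal{B}=\{\beta:\ f(\cdot;\beta)>0\ \text{on its whole existence interval}\}$. This is a global-in-$\xi$ condition, so neither its openness (your ``quantitative lower bound'') nor its non-emptiness follows from continuous dependence, which only controls compact $\xi$-ranges; your small-$\beta$ argument shows the profile is large and increasing on a compact interval, not that it stays positive forever. The paper avoids both difficulties by replacing your $\mathcal{B}$ with the finite-range open condition $\xi_1(\beta)<\xi_0(\beta)$ (loss of monotonicity before vanishing, the set $\mathcal{C}$ of Lemma~\ref{lem.C}), taking the good set to be the closed complement $(0,\infty)\setminus(\mathcal{A}\cup\mathcal{C})$, and then proving that profiles in this set do vanish at a \emph{finite} $\xi_0(\beta)$ by invoking the instantaneous shrinking of supports for Eq.~\eqref{eq1} from \cite{ILS22} (Lemma~\ref{lem.B}) --- an external input your proposal lacks and would need in order to exclude positive non-increasing profiles defined on all of $[0,\infty)$.
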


Let us point out that, in strong contrast with the range $\sigma>2(1-q)/(m-1)$ analyzed in \cite{ILS22} and where the self-similarity exponents were uniquely determined, in the present case we have two free parameters for the shooting technique: both the initial value of the solution at $x=0$ and the self-similar exponent $\beta$. Thus, in order to have uniqueness, we need to fix this initial value in view of the rescaling~\eqref{resc}, as explained above.

One of the interesting features of this work is the fact that the proof of Theorem~\ref{th.1} is based on a \emph{mix between various techniques}. We employ mainly a shooting technique with respect to the free parameter $\beta$, but in order to study the interface behavior and establish the uniqueness in Theorem \ref{th.1} we transform \eqref{ODE} into a quadratic three-dimensional autonomous dynamical system and study a specific local behavior and critical point in the associated phase space. Let us stress here that we have to go deeper than the analogous study of the interface behavior in \cite[Section 4]{ILS22}, since in some cases we need a second order local expansion near the interface point.

We end up this presentation by mentioning that the present work is the first part of a two-parts analysis of eternal solutions to Eq.~\eqref{eq1} and will be followed by a companion work \cite{ILS23} in which a second and rather surprising type of profiles, presenting a dead-core, is identified and classified, by employing a quite different bunch of techniques based on the complete analysis of an auxiliary dynamical system. Altogether, the existence of such a variety of self-similar solutions in exponential form shows that the dynamics of Eq.~\eqref{eq1} in the critical case $\sigma=\sigma_c$ is expected to be rather complex and to depend on many features of the initial conditions (such as concentration near $x=0$, magnitude of $\|u_0\|_{\infty}$ and location of the points where the maximum is attained, to name but a few) and is definitely a challenging problem.

\section{Proof of Theorem~\ref{th.1}}\label{sec.th1}

The proof of Theorem~\ref{th.1} is based on a shooting method with respect to the free exponent $\beta$ and follows the same strategy as \cite[Section~4]{ILS22}. However, a number of preparatory results are proved in a different way and the analysis near the interface requires to be improved in some cases with the help of a phase space analysis. We divide this section into several subsections containing the main steps of the proof.

\subsection{Existence of a compactly supported self-similar solution}\label{subsec.exist}

Let $\beta>0$ and $\alpha = 2\beta/(m-1)$. Recalling the differential equation~\eqref{ODE} satisfied by the self-similar profiles $f$ and setting for simplicity $F=f^m$, we study, as explained in the Introduction, the Cauchy problem
\begin{subequations}\label{eq.exist}
\begin{equation}\label{SSODE}
F''(\xi)+\frac{N-1}{\xi}F'(\xi)+\alpha f(\xi)-\beta\xi f'(\xi)-\xi^{\sigma}f^q(\xi)=0,
\end{equation}
\begin{equation}\label{Cauchy.cond}
F(0)=1, \qquad F'(0)=0.
\end{equation}
\end{subequations}
We obtain from the Cauchy-Lipschitz theorem that the problem~\eqref{eq.exist} has a unique positive solution $F(\cdot;\beta)\in C^2([0,\xi_{\max}(\beta)))$ defined on a maximal existence interval for which we have the following alternative: either $\xi_{\max}(\beta)=\infty$ or
$$
\xi_{\max}(\beta)<\infty \qquad {\rm and} \qquad \lim\limits_{\xi\to\xi_{\max}(\beta)}\left[F(\xi;\beta)+\frac{1}{F(\xi;\beta)}\right]=\infty.
$$
We next define
\begin{equation}\label{support}
\xi_0(\beta):=\inf\{\xi\in(0,\xi_{\max}(\beta))\ :\ f(\xi)=0\}\in(0,\xi_{\max}(\beta)],
\end{equation}
and
\begin{equation}\label{decrease}
\xi_1(\beta):=\sup\{\xi\in(0,\xi_{0}(\beta))\ :\ f'<0 \ {\rm on} \ (0,\xi)\}.
\end{equation}
We readily notice from \eqref{SSODE} and the $C^2$-regularity of $F$ that
\begin{equation}\label{init.sec.der}
F''(0;\beta)=-\frac{2\beta}{(m-1)N}<0,
\end{equation}
so that $\xi_1(\beta)>0$. Let us now study more precisely the behavior of $F(\cdot;\beta)$ near $\xi_0(\beta)$ when $\xi_0(\beta)$ is finite.

\begin{lemma}\label{lem.ext}
	Consider $\beta>0$ such that $\xi_0(\beta)<\infty$. Then $\xi_{\max}(\beta)=\xi_0(\beta)$ and $F=F(\cdot;\beta)\in C^1([0,\xi_0(\beta)])$ satisfies $F(\xi_0(\beta))=0$ and
	\begin{equation*}
		F'(\xi_0(\beta))  = \xi_0(\beta)^{1-N} \int_0^{\xi_0(\beta)} \xi_*^{N-1} \big[ \xi_*^\sigma f^q(\xi_*) - (\alpha+N\beta) f(\xi_*) \big]\,d\xi_*,
	\end{equation*}
recalling that $f=F^{1/m}$. Furthermore, if $\xi_1(\beta)=\xi_0(\beta)$, then the extension of $F$ by zero on $(\xi_0(\beta),\infty)$ belongs to $C^2([0,\infty))$ and is a solution to~\eqref{eq.exist} on $[0,\infty)$ with
\begin{equation*}
	F(\xi_0(\beta)) = F'(\xi_0(\beta)) = F''(\xi_0(\beta)) =0.
\end{equation*}
\end{lemma}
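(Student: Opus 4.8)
The plan is to derive everything from the divergence structure of \eqref{SSODE}. First I would multiply \eqref{SSODE} by $\xi^{N-1}$, so that the two highest-order terms combine into $(\xi^{N-1}F')'$, then integrate over $(0,\xi)$ and integrate the transport term by parts using $F'(0)=0$ and $f(0)=1$, arriving at
\[
\xi^{N-1}F'(\xi)=\beta\xi^{N}f(\xi)+\int_0^{\xi}s^{N-1}\big[s^{\sigma}f^{q}(s)-(\alpha+N\beta)f(s)\big]\,ds,\qquad 0<\xi<\xi_0(\beta).
\]
Since $F$ stays positive on $(0,\xi_{\max}(\beta))$, the first vanishing of $f$ cannot be interior, so the hypothesis $\xi_0(\beta)<\infty$ forces $\xi_{\max}(\beta)=\xi_0(\beta)$, and the blow-up alternative combined with $f>0$ on $(0,\xi_0(\beta))$ selects the branch $F\to0$, i.e. $F(\xi_0(\beta))=0$. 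In particular $f$ is bounded on $[0,\xi_0(\beta)]$, the integrand above is bounded, and the right-hand side extends continuously up to $\xi_0(\beta)$; dividing by $\xi^{N-1}$ then shows that $F'$ has a finite limit at $\xi_0(\beta)$, so that $F\in C^1([0,\xi_0(\beta)])$, and evaluating at $\xi_0(\beta)$ (where the term $\beta\xi^{N}f$ drops out) gives the stated formula for $F'(\xi_0(\beta))$.

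For the last assertion I would work locally near $\xi_0(\beta)$. The extension by zero automatically solves \eqref{eq.exist} on $(\xi_0(\beta),\infty)$, so the point is to show that under $\xi_1(\beta)=\xi_0(\beta)$ the matching is $C^2$, that is $F'(\xi_0(\beta))=F''(\xi_0(\beta))=0$ with $F''\to0$. Testing algebraic behaviours $F(\xi)\sim C(\xi_0(\beta)-\xi)^{p}$ in \eqref{SSODE}, a dominant-balance analysis reveals two candidate interfaces: a non-degenerate corner with $p=1$ (that is $F'(\xi_0(\beta))=L<0$, for which $f'\to-\infty$ and $F''\to-\infty$, so the zero extension is only Lipschitz), and a degenerate one in which the diffusion term is balanced by the stronger of the absorption and transport terms. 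In the degenerate regime the exponent is $p=2m/(m-q)$ when the absorption dominates, which occurs for $m+q\le2$, and $p=m/(1-q)$ when the transport dominates, which occurs for $m+q>2$; since $p>2$ in both cases, one obtains $F'(\xi_0(\beta))=F''(\xi_0(\beta))=0$ and $F''\to0$, hence the $C^2$ extension. The role of the monotonicity hypothesis is to discard the corner behaviour.

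The main obstacle is exactly this exclusion. Both local behaviours are compatible with \eqref{SSODE} and even with $f$ being decreasing near $\xi_0(\beta)$, so ruling out the corner under global monotonicity cannot be achieved by the expansion alone and requires the finer tool announced in the introduction: recasting \eqref{ODE} as an autonomous quadratic three-dimensional system and identifying the critical point attached to the interface, so that monotone trajectories are shown to reach it along the degenerate direction rather than along the corner one. The borderline and, especially, the regime $m+q>2$ are the delicate cases, since there the transport term dominates the absorption in the interface balance and a second-order local expansion is needed to fix the sign and the leading coefficient; this is precisely where the analysis has to go beyond that of \cite[Section~4]{ILS22}.
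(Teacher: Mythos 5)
Your treatment of the first statement is correct and is essentially the paper's own proof: multiply \eqref{SSODE} by $\xi^{N-1}$, integrate the transport term by parts, pass to the limit $\xi\to\xi_0(\beta)$ using $f(\xi_0(\beta))=0$. Your signed identity $\xi^{N-1}F'(\xi)-\beta\xi^N f(\xi)=\int_0^\xi s^{N-1}\big[s^\sigma f^q(s)-(\alpha+N\beta)f(s)\big]\,ds$ is in fact the correct one (the paper's displayed antiderivative carries $+\beta\xi^N f$, a harmless sign slip since this term vanishes at $\xi_0(\beta)$), and your handling of the blow-up alternative matches the paper.

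For the second statement, however, your argument rests on a claim that is false: that the hypothesis $\xi_1(\beta)=\xi_0(\beta)$ (global monotonicity) excludes the corner $F'(\xi_0(\beta))<0$, with the exclusion to be supplied by a phase-space analysis showing that ``monotone trajectories reach the critical point along the degenerate direction.'' By the very definition of $\mathcal{A}$, every $\beta\in\mathcal{A}$ has $F'<0$ on $(0,\xi_0(\beta)]$, hence $f'<0$ on $(0,\xi_0(\beta))$ and thus $\xi_1(\beta)=\xi_0(\beta)$, while $F'(\xi_0(\beta))<0$; and Lemma~\ref{lem.A} shows $\mathcal{A}\supseteq(\beta_u,\infty)$ is nonempty. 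So monotone profiles landing on the corner not only exist, they are the backbone of the shooting method, and no dynamical-systems refinement can prove that all monotone trajectories avoid the corner. The input that actually removes the corner is $F'(\xi_0(\beta))=0$, and in the paper it comes from the trichotomy defining $\mathcal{B}$: once $\xi_1(\beta)=\xi_0(\beta)<\infty$, the dichotomy $F'(\xi_0(\beta))<0$ (i.e.\ $\beta\in\mathcal{A}$) versus $F'(\xi_0(\beta))=0$ (i.e.\ $\beta\in\mathcal{B}$, Lemma~\ref{lem.B}) is definitional, not deduced from monotonicity; the second statement of the lemma is only invoked in this latter situation. You also mis-attribute the role of the quadratic three-dimensional system of Section~\ref{subsec.interf}: in the paper it serves to obtain the \emph{second-order} interface expansion when $m+q<2$ (Proposition~\ref{prop.order2}), needed for uniqueness, not to discard corners. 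Finally, even granting $F'(\xi_0(\beta))=0$, your dominant-balance exponents (which do agree with Proposition~\ref{prop.interf}) are only formal as presented: to conclude $F''(\xi)\to 0$ from \eqref{SSODE} one must control the term $\beta\xi f'(\xi)$, i.e.\ prove $f'(\xi)\to 0$, which requires the rigorous interface estimates (Lemma~\ref{lem.upper.interf}, Corollary~\ref{cor.interf}, Proposition~\ref{prop.interf}) rather than the ansatz alone.
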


\begin{proof}
	As $\xi_0(\beta)<\infty$, then the above alternative implies that $\xi_{\max}(\beta)=\xi_0(\beta)$ and
	\begin{equation}
		\lim_{\xi\to\xi_0(\beta)} F(\xi)=0. \label{ext0}
	\end{equation}
Moreover, it follows from~\eqref{SSODE} that
\begin{equation*}
	\frac{d}{d\xi} \big[ \xi^{N-1} F'(\xi) + \beta \xi^N f(\xi) \big] = \xi^{N-1} \big[ \xi^\sigma f^q(\xi) - (\alpha+N\beta) f(\xi) \big]
\end{equation*}
for $\xi\in [0,\xi_0(\beta))$; hence, after integration over $(0,\xi)$,
\begin{equation*}
	\xi^{N-1} F'(\xi) + \beta \xi^N f(\xi) = \int_0^\xi \xi_*^{N-1} \big[ \xi_*^\sigma f^q(\xi_*) - (\alpha+N\beta) f(\xi_*) \big]\,d\xi_*.
\end{equation*}
Since we have already established in~\eqref{ext0} that $F$ and $f$ have a continuous extension on $[0,\xi_0(\beta)]$, we may take the limit $\xi\to\xi_0(\beta)$ in the above identity and complete the proof of the first statement of Lemma~\ref{lem.ext}. Owing to the definition of $\xi_1(\beta)$, the second statement then readily follows with the help of~\eqref{SSODE}.
\end{proof}

We now introduce the following three sets:
\begin{equation*}
\begin{split}
&\mathcal{A}:=\{\beta>0:\xi_0(\beta)<\infty \ {\rm and} \ F'(\xi;\beta)<0 \ {\rm for} \ \xi\in(0,\xi_0(\beta)]\}, \\
&\mathcal{C}:=\{\beta>0:\xi_1(\beta)<\xi_0(\beta)\},\\
&\mathcal{B}:=(0,\infty)\setminus(\mathcal{A}\cup\mathcal{C}),
\end{split}
\end{equation*}
and observe that $\mathcal{A}\cap\mathrm{C}=\emptyset$. Let us first show that the sets $\mathcal{A}$ and $\mathcal{C}$ are non-empty and open.

\begin{lemma}\label{lem.A}
The set $\mathcal{A}$ is non-empty and open and there exists $\beta_u>0$ such that $(\beta_u,\infty)\subseteq\mathcal{A}$.
\end{lemma}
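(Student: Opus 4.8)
The plan is to control the sign of the auxiliary function
$$P(\xi) := \xi^{N-1}F'(\xi) + \beta\xi^N f(\xi), \qquad \xi\in[0,\xi_0(\beta)],$$
which already appeared in the proof of Lemma~\ref{lem.ext}. From~\eqref{SSODE} one has $P(0)=0$ and
$$P'(\xi) = \xi^{N-1}\big[\xi^\sigma f^q(\xi) - (\alpha+N\beta)f(\xi)\big] = \xi^{N-1}f(\xi)\big[\xi^\sigma f^{q-1}(\xi) - (\alpha+N\beta)\big].$$
Two observations drive the argument. First, since $F'(\xi)=\xi^{1-N}P(\xi)-\beta\xi f(\xi)$ and $f>0$ on $(0,\xi_0(\beta))$, the inequality $P\le 0$ forces $F'<0$; in particular, if $P<0$ on $(0,\xi_0(\beta)]$, then $F'<0$ there, i.e. $\xi_1(\beta)=\xi_0(\beta)$ and, using $f(\xi_0(\beta))=0$, $F'(\xi_0(\beta))=\xi_0(\beta)^{1-N}P(\xi_0(\beta))<0$. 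Second, $P<0$ yields $F'<-\beta\xi F^{1/m}$, and integrating $(F^{(m-1)/m})'=\frac{m-1}{m}F^{-1/m}F'<-\frac{m-1}{m}\beta\xi$ shows that $F$ must vanish at a finite $\xi_0(\beta)$. Hence I would reduce the membership $\beta\in\mathcal{A}$ to the single assertion that $P<0$ on $(0,\xi_0(\beta)]$.

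For the half-line $(\beta_u,\infty)\subseteq\mathcal{A}$ I would verify this sign condition for $\beta$ large. Near the origin $f(0)=1$ and $\xi^\sigma f^{q-1}\to 0$, so the bracket in $P'$ is negative and $P<0$ just after $\xi=0$. The only region where $P$ can increase is the set where $\xi^\sigma f^{q-1}>\alpha+N\beta$, that is, where $f$ is extremely small, namely $f^{1-q}(\xi)<\xi^\sigma/(\alpha+N\beta)$; there $P'\le \xi^{N-1+\sigma}f^q$, which is tiny. The quantitative estimate, which I expect to be the main obstacle, is to show that the total increase $\int_{\{P'>0\}}P'\,d\xi$ is negligible compared with the magnitude of the (large, negative) values attained by $P$ in the bulk region where $f=O(1)$. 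This needs an a~priori localization of the support: the expansion $F''(0;\beta)=-2\beta/((m-1)N)$ from~\eqref{init.sec.der} suggests $\xi_0(\beta)=O(\beta^{-1/2})$, and, together with $\alpha+N\beta\sim\beta$, the thin increase layer then contributes $o(1)$ while $P$ drops by an amount bounded below on the bulk. Carrying out these estimates carefully gives $P<0$ on $(0,\xi_0(\beta)]$ for all $\beta$ larger than some $\beta_u$, hence $(\beta_u,\infty)\subseteq\mathcal{A}$ and, in particular, $\mathcal{A}\neq\emptyset$.

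Finally, for openness I would use continuous dependence of $F(\cdot;\beta)$ on $\beta$. Fix $\beta\in\mathcal{A}$ and pick $\bar\xi\in(0,\xi_0(\beta))$ close to $\xi_0(\beta)$, so that $F(\bar\xi;\beta)$ is as small as desired while $F'(\cdot;\beta)<0$ on $(0,\bar\xi]$. Because $F''(0;\beta)<0$ depends continuously on $\beta$ through~\eqref{init.sec.der}, the inequality $F'(\cdot;\beta')<0$ persists on a fixed interval $(0,\xi')$ for $\beta'$ near $\beta$; on the compact set $[\xi',\bar\xi]$, where $F(\cdot;\beta)$ is bounded away from $0$ and $F'(\cdot;\beta)\le -c<0$, continuous dependence in $C^1$ likewise preserves $F'(\cdot;\beta')<0$. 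Thus $F'(\cdot;\beta')<0$ on $(0,\bar\xi]$, and $F(\bar\xi;\beta')$, $F'(\bar\xi;\beta')$ are close to their values at $\beta$, so that $P(\bar\xi;\beta')<0$ and $F(\bar\xi;\beta')$ is small. It then remains to propagate the sign of $P$ beyond $\bar\xi$: since $f$ is small there, the set $\{P'>0\}$ is again a thin layer on which the increase is negligible, so $P(\cdot;\beta')$ stays negative up to its vanishing point, whence $\beta'\in\mathcal{A}$ by the reduction above. This shows that $\mathcal{A}$ is open.
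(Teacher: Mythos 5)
Your reduction of membership in $\mathcal{A}$ to the sign condition $P<0$ on $(0,\xi_0(\beta)]$, with $P(\xi)=\xi^{N-1}F'(\xi)+\beta\xi^N f(\xi)$, is correct and clean: $P\le 0$ gives $F'\le -\beta\xi f<0$ on the positivity set, the inequality $(F^{(m-1)/m})'<-\frac{m-1}{m}\beta\xi$ forces vanishing at some $\xi_0(\beta)\le \sqrt{2m/((m-1)\beta)}$, and $F'(\xi_0(\beta))=\xi_0(\beta)^{1-N}P(\xi_0(\beta))<0$. This is a genuinely different route from the paper, which instead rescales $\zeta=\xi\sqrt{\beta}$, observes that the absorption term in \eqref{SSODE.A} carries the factor $\beta^{-(\sigma+2)/2}\to 0$, and concludes by continuous dependence from the absorption-free limit problem, citing \cite[Lemma~4.4]{ILS22} and \cite{Shi04}. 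But as written your proof has a genuine gap precisely where you flag it: the assertion that the total increase of $P$ on the thin layer $\{f^{1-q}<\xi^\sigma/(\alpha+N\beta)\}$ is outweighed by the negative bulk values of $P$ is never proved, and the localization $\xi_0(\beta)=O(\beta^{-1/2})$ on which your layer estimate relies is only ``suggested'' by \eqref{init.sec.der} --- as stated it is circular, since the bound $\xi_0(\beta)\le\sqrt{2m/((m-1)\beta)}$ is itself derived under the hypothesis $P<0$ that you are trying to establish.

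The gap is fillable by a bootstrap at the first zero of $P$, and you should make this explicit. Let $\xi_*:=\inf\{\xi:P(\xi)=0\}$ and suppose $\xi_*<\xi_0(\beta)$. On $(0,\xi_*)$ all your sufficient conditions hold, so $f$ is decreasing, $f\le 1$, and $\xi_*\le R_\beta:=\sqrt{2m/((m-1)\beta)}$, which legitimizes the localization on this interval. Moreover $P'\ge -(\alpha+N\beta)\xi^{N-1}$ gives $P(\xi)\ge -(\alpha+N\beta)\xi^N/N$, hence $F'\ge -C\beta\xi$ and $f\ge 2^{-1/m}$ on $[0,\delta_\beta]$ with $\delta_\beta:=c\beta^{-1/2}$; on $(\delta_\beta/2,\delta_\beta)$ the bracket in $P'$ is $\le -N\beta/2$ for $\beta$ large (since $\xi^\sigma f^{q-1}\le 2\delta_\beta^\sigma\ll\beta$ there), yielding the bulk drop $P(\delta_\beta)\le -c\beta^{1-N/2}$; and since $f$ is decreasing, $\{P'>0\}\cap(0,\xi_*)$ is an interval on which
\begin{equation*}
	P' \le \xi^{N-1+\sigma}f^q \le (\alpha+N\beta)^{-q/(1-q)}\,\xi^{N-1+\sigma/(1-q)},
\end{equation*}
so the total gain is at most $C\beta^{-q/(1-q)}R_\beta^{N+2/(m-1)}\sim \beta^{-N/2-q/(1-q)-1/(m-1)}$, using $\sigma/(1-q)=2/(m-1)$ from \eqref{range.exp}. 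Since $1+q/(1-q)+1/(m-1)>0$, the gain is $o(\beta^{1-N/2})$ and $P(\xi_*)=0$ is contradicted for $\beta\ge\beta_u$ large. The same quantitative layer bound is what your openness paragraph needs past $\bar\xi$ (there it is easier, since $|P(\bar\xi;\beta')|$ is bounded below by a fixed constant while the gain is $O(\epsilon^q)$ with $\epsilon=f(\bar\xi)$ at your disposal). With these estimates supplied, your argument is complete and arguably more self-contained than the paper's, at the price of exactly the computation the paper avoids by passing to the rescaled problem \eqref{SSODE.A} and quoting the known absorption-free analysis.
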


\begin{proof}
Set $g(\xi;\beta)=f(\xi/\sqrt{\beta};\beta)$ for $\xi\in [0,\sqrt{\beta} \xi_0(\beta)]$, or equivalently $f(\xi;\beta)=g(\xi\sqrt{\beta};\beta)$ for $\xi\in  [0,\xi_0(\beta)]$. Setting also $G:=g^m$, we obtain by straightforward calculations that $g$ (and thus $G$) solves the Cauchy problem
\begin{subequations}
\begin{equation}\label{SSODE.A}
G''(\zeta)+\frac{N-1}{\zeta}G'(\zeta)+\frac{2}{m-1}g(\zeta)-\zeta g'(\zeta)-\beta^{-(\sigma+2)/2}\zeta^{\sigma}g^q(\zeta)=0,
\end{equation}
\begin{equation}\label{Cauchy.cond.A}
G(0)=1, \qquad G'(0)=0,
\end{equation}
\end{subequations}
where $\zeta=\xi\sqrt{\beta}$. Noticing that in the limit $\beta\to\infty$ the last term in \eqref{SSODE.A} vanishes, we proceed exactly as in the proof of \cite[Lemma~4.4]{ILS22} (see also the proof of \cite[Theorem~2]{Shi04} from where the idea comes) to conclude that there exists $\beta_u>0$ such that $(\beta_u,\infty)\subseteq\mathcal{A}$. We omit here the details as they are totally similar to the ones in the quoted references. That $\mathcal{A}$ is open is an immediate consequence of the continuous dependence of $f(\cdot;\beta)$ on $\beta$.
\end{proof}

As for the set $\mathcal{C}$, we do not need a rescaling in order to prove that it is non-empty.

\begin{lemma}\label{lem.C}
The set $\mathcal{C}$ is non-empty and open and there exists $\beta_l>0$ such that $(0,\beta_l)\subseteq\mathcal{C}$.
\end{lemma}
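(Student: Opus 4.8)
The plan is to show that small values of $\beta$ force the profile to turn around before it can reach zero, so that $\xi_1(\beta)<\xi_0(\beta)$ and hence $\beta\in\mathcal{C}$. The heuristic is that as $\beta\to0$ one also has $\alpha=2\beta/(m-1)\to0$, so \eqref{SSODE} degenerates towards the stationary balance $\Delta F=\xi^{\sigma}f^{q}$, whose radial solutions with $F'(0)=0$ are \emph{increasing}; the only mechanism making $f$ decrease near the origin is the term recorded in \eqref{init.sec.der}, $F''(0;\beta)=-2\beta/((m-1)N)$, whose strength is merely $O(\beta)$. Thus for $\beta$ small the weak initial decrease should be overwhelmed by the absorption-driven upturn, producing a point $\xi_1(\beta)$ with $f'(\xi_1(\beta))=0$ and $f(\xi_1(\beta))>0$. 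I would make this quantitative on the interval where $f$ stays close to $1$.

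The main tool is the first integral already exploited in Lemma~\ref{lem.ext}, namely
\begin{equation*}
\xi^{N-1}F'(\xi)+\beta\xi^{N}f(\xi)=\int_{0}^{\xi}s^{N-1}\big[s^{\sigma}f^{q}(s)-(\alpha+N\beta)f(s)\big]\,ds .
\end{equation*}
Introduce $\xi_{*}:=\sup\{\xi:f\ge1/2\ \text{on}\ [0,\xi]\}$ and work on $[0,\xi_{*})$, where $2^{-q}\le f^{q}\le f\le1$. Bounding the bracket from below by $s^{\sigma}2^{-q}-(\alpha+N\beta)$ and inserting this into the identity yields a lower bound of the form
\begin{equation*}
F'(\xi)\ge\xi\left[\frac{2^{-q}}{N+\sigma}\,\xi^{\sigma}-c_{1}\beta\right],\qquad c_{1}:=\frac{2}{N(m-1)}+2 ,
\end{equation*}
so that $F'(\xi)>0$ for $\xi\in[\xi_{\beta},\xi_{*})$ once $\xi_{\beta}:=(c_{2}\beta)^{1/\sigma}$ with $c_{2}=c_{2}(m,N,q)$ chosen suitably. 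Discarding instead the positive absorption contribution gives the crude bound $F'(\xi)\ge-c_{1}\beta\,\xi$, whence $F(\xi)\ge1-c_{1}\beta\xi^{2}/2$ on $[0,\xi_{*})$. Since $\xi_{\beta}^{2}\sim\beta^{2/\sigma}$, this last estimate shows $F\ge1-C\beta^{1+2/\sigma}>2^{-m}$ on $[0,\xi_{\beta}]$ for $\beta$ small, so that $f>1/2$ there and therefore $\xi_{\beta}<\xi_{*}$; the two bounds are thus simultaneously valid on $[0,\xi_{\beta}]$.

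With this bootstrap I would conclude as follows. By \eqref{init.sec.der} we have $F'<0$ on a right-neighbourhood of $0$, while the lower bound gives $F'(\xi_{\beta})>0$; hence the first zero $\xi_{1}(\beta)$ of $F'$ (equivalently of $f'$, since $f>0$) satisfies $\xi_{1}(\beta)\in(0,\xi_{\beta})$, and $f(\xi_{1}(\beta))\ge1/2>0$ forces $\xi_{1}(\beta)<\xi_{0}(\beta)$. This proves $(0,\beta_{l})\subseteq\mathcal{C}$ once $\beta_{l}$ is small enough for the above estimates to hold, and in particular $\mathcal{C}\ne\emptyset$. For openness I would rely on continuous dependence of $f(\cdot;\beta)$ on $\beta$: if $\beta_{0}\in\mathcal{C}$ there is $\eta\in(\xi_{1}(\beta_{0}),\xi_{0}(\beta_{0}))$ with $f'(\eta;\beta_{0})>0$ and $f(\cdot;\beta_{0})>0$ on $[0,\eta]$, and both properties persist for $\beta$ close to $\beta_{0}$, whence $\xi_{1}(\beta)\le\eta<\xi_{0}(\beta)$. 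The step I expect to be most delicate is the \emph{coupling} of the two a priori bounds, i.e. checking that the upturn radius $\xi_{\beta}$ really lies inside $\{f\ge1/2\}$ rather than beyond it, since both the turnaround threshold and the control of the decrease of $f$ are governed by the same small parameter $\beta$ and must be reconciled through the bootstrap on $\xi_{*}$. The openness claim also hides a mild transversality issue (whether $f''(\xi_{1}(\beta_{0}))>0$), which is why I would prefer to establish it via the auxiliary point $\eta$ with $f'(\eta)>0$ rather than through the implicit function theorem applied directly at $\xi_{1}(\beta_{0})$.
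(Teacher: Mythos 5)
Your route is genuinely different from the paper's. The paper proves $(0,\beta_l)\subseteq\mathcal{C}$ softly: it passes to the limit problem $\beta=0$, namely $H''+\frac{N-1}{\xi}H'-\xi^{\sigma}H^{q/m}=0$ with $H(0)=1$, $H'(0)=0$, observes that $(\xi^{N-1}H')'=\xi^{N+\sigma-1}H^{q/m}>0$ forces $H'>0$, fixes $\delta\in(0,\xi_H)$, and then invokes continuous dependence on $\beta$ over the compact interval $[0,\delta]$ to get $F(\cdot;\beta)>1/2$ on $[0,\delta]$ and $F'(\delta;\beta)>H'(\delta)/2>0$ for $\beta\in(0,\beta_l)$. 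You instead work directly at finite $\beta$ with the first integral from Lemma~\ref{lem.ext} and extract quantitative information the paper's argument does not give, namely the turnaround scale $\xi_1(\beta)\lesssim\beta^{1/\sigma}$; your heuristic (the limit equation has increasing profiles) is exactly the paper's, but the execution replaces the compactness/continuous-dependence step for non-emptiness by explicit estimates. The openness arguments coincide (continuous dependence), and both are equally terse about the same subtlety: membership in $\mathcal{C}$ only gives $f'(\xi_1(\beta_0);\beta_0)=0$ with $f(\xi_1(\beta_0);\beta_0)>0$, and a point $\eta$ with $f'(\eta;\beta_0)>0$ exists only if $f'$ actually changes sign rather than touching zero, so your auxiliary-point device does not fully dispose of the transversality issue either.

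There is, however, a concrete hole in your bootstrap. You define $\xi_*$ only through the lower bound $f\ge 1/2$, yet both of your key estimates --- the lower bound on the bracket $s^{\sigma}f^{q}-(\alpha+N\beta)f$ and the crude bound $F'\ge -c_1\beta\xi$ --- use $f\le 1$ on the whole interval of integration, which the definition of $\xi_*$ does not supply. This matters: past the first critical point $\xi_1(\beta)$ the profile increases (for small $\beta$ it tracks the increasing limit profile $H^{1/m}$), and a short computation shows the potential overshoot of $F$ above $1$ on $(\xi_1(\beta),\xi_\beta]$ is of order $\xi_\beta^{\sigma+2}\sim\beta^{1+2/\sigma}$, i.e.\ the \emph{same} order as the decrease you control, so $f\le 1$ on $[0,\xi_\beta]$ cannot simply be asserted. (The written chain $2^{-q}\le f^{q}\le f\le 1$ is also reversed in the middle for $f\le 1$ and $q\in(0,1)$, though you only use $f^q\ge 2^{-q}$.) The fix is routine and keeps your scheme intact: run a two-sided bootstrap on $\{1/2\le f\le 2\}$, enlarging $c_1$ accordingly, and control the upper side by dropping the negative terms in the first integral to get $F'(\xi)\le \frac{2^{q}}{N+\sigma}\,\xi^{\sigma+1}$, hence $F\le 1+C\xi_\beta^{\sigma+2}<2^{m}$ on $[0,\xi_\beta]$ for $\beta$ small. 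With that repair your quantitative proof of $(0,\beta_l)\subseteq\mathcal{C}$ is complete and even slightly stronger than the paper's.
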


\begin{proof}
We obtain by letting $\beta\to 0$ in \eqref{eq.exist} that the limit equation is
\begin{subequations}\label{eqC}
\begin{equation}\label{SSODE.C}
H''(\xi)+\frac{N-1}{\xi}H'(\xi)-\xi^{\sigma}H^{q/m}(\xi)=0,
\end{equation}
with initial conditions
\begin{equation}\label{Cauchy.cond.C}
H(0)=1, \qquad H'(0)=0.
\end{equation}
\end{subequations}
By the Cauchy-Lipschitz theorem, the problem \eqref{eqC} has a unique positive solution $H\in C^2([0,\xi_H))$ defined on a maximal existence interval for which we have the following alternative: either $\xi_H=\infty$ or
$$
\xi_H<\infty \qquad {\rm and} \qquad \lim\limits_{\xi\to\xi_H}\left[H(\xi)+\frac{1}{H(\xi)}\right]=\infty.
$$
It follows from \eqref{eqC} that
$$
\frac{d}{d\xi}(\xi^{N-1}H'(\xi))=\xi^{N-1}\left[H''(\xi)+\frac{N-1}{\xi}H'(\xi)\right]=\xi^{N+\sigma-1}H^{q/m}(\xi)>0.
$$
Hence $\xi^{N-1}H'(\xi)>0$ and thus $H'(\xi)>0$ for any $\xi\in (0,\xi_H)$. Given $\delta\in (0,\xi_H)$ fixed, we have $H'(\delta)>0$ and $H(\xi)>1$ for any $\xi\in(0,\delta)$. The continuous dependence with respect to the parameter $\beta$ in \eqref{eq.exist} ensures that there exists $\beta_l>0$ such that
$$
F(\xi;\beta)>\frac{1}{2}, \qquad \xi\in[0,\delta], \qquad F'(\delta;\beta)>\frac{H'(\delta)}{2}>0
$$
for any $\beta\in(0,\beta_l)$. Recalling \eqref{support} and \eqref{decrease}, we conclude that $\xi_1(\beta)\in(0,\delta)$ and $\xi_0(\beta)>\delta$ for any $\beta\in(0,\beta_l)$; that is, $\xi_1(\beta)<\xi_0(\beta)$ for $\beta\in (0,\beta_l)$and $(0,\beta_l)\subseteq\mathcal{C}$. We use once more the continuous dependence with respect to the parameter $\beta$ of $F(\cdot;\beta)$ to conclude that $\mathcal{C}$ is open.
\end{proof}

We infer from Lemmas~\ref{lem.A} and~\ref{lem.C} that the set $\mathcal{B}$ is non-empty and closed. The instantaneous shrinking of supports of bounded solutions to Eq.~\eqref{eq1} proved in \cite[Theorem~1.1]{ILS22}, together with the definition of the set $\mathcal{A}$, readily gives the following characterization of the elements in the set $\mathcal{B}$.

\begin{lemma}\label{lem.B}
Let $\beta\in\mathcal{B}$. Then $\xi_0(\beta)=\xi_1(\beta)<\infty$ and $(f^m)'(\xi_0(\beta);\beta)=0$.
\end{lemma}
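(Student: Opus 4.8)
The plan is to peel off the two conditions defining $\mathcal{B}=(0,\infty)\setminus(\mathcal{A}\cup\mathcal{C})$ one at a time, the only real work being to rule out $\xi_0(\beta)=\infty$. Fix $\beta\in\mathcal{B}$. Since $\beta\notin\mathcal{C}$ we have $\xi_1(\beta)=\xi_0(\beta)$, so by the definition~\eqref{decrease} of $\xi_1(\beta)$ the profile satisfies $f'(\cdot;\beta)<0$ on $(0,\xi_0(\beta))$; in particular both $f$ and $F=f^m$ are strictly decreasing there, whence $F'\le 0$. Granting for the moment that $\xi_0(\beta)<\infty$, the conclusion follows quickly: Lemma~\ref{lem.ext} provides the extension $F\in C^1([0,\xi_0(\beta)])$, so that $F'(\xi_0(\beta))=\lim_{\xi\to\xi_0(\beta)}F'(\xi)\le 0$. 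Were this limit strictly negative we would have $F'<0$ on the whole interval $(0,\xi_0(\beta)]$, placing $\beta$ in $\mathcal{A}$ and contradicting $\beta\notin\mathcal{A}$. Hence $F'(\xi_0(\beta))=0$, i.e.\ $(f^m)'(\xi_0(\beta);\beta)=0$, together with $\xi_0(\beta)=\xi_1(\beta)<\infty$, which is exactly the assertion.

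It remains to exclude $\xi_0(\beta)=\infty$, and this is where I expect the main difficulty to lie. Assume $\xi_0(\beta)=\infty$. Then $\xi_{\max}(\beta)=\infty$ and $f(\cdot;\beta)$ is positive and strictly decreasing on $(0,\infty)$, hence converges to some limit $L\ge 0$ as $\xi\to\infty$. I would first show $L=0$. Integrating~\eqref{SSODE} exactly as in the proof of Lemma~\ref{lem.ext} gives, for every $\xi>0$,
\begin{equation*}
\xi^{N-1}F'(\xi)+\beta\xi^N f(\xi)=\int_0^\xi \xi_*^{N-1}\big[\xi_*^\sigma f^q(\xi_*)-(\alpha+N\beta)f(\xi_*)\big]\,d\xi_*.
\end{equation*}
If $L>0$, then $\xi_*^\sigma f^q(\xi_*)\to\infty$ while $(\alpha+N\beta)f(\xi_*)$ remains bounded, so the integrand is bounded below by $\tfrac12(L/2)^q\,\xi_*^{N-1+\sigma}$ for $\xi_*$ large, and the right-hand side grows at least like $\xi^{N+\sigma}$. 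On the other hand, using $F'\le 0$ and $f\le f(0)=1$, the left-hand side is at most $\beta\xi^N$; since $\sigma>0$, these two estimates are incompatible for large $\xi$, which rules out $L>0$.

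Thus $L=0$, so the initial profile $f(|\cdot|)$ is bounded, non-negative and vanishes at infinity, and the associated self-similar function $U(t,x)=e^{-\alpha t}f(|x|e^{\beta t})$ is a bounded non-negative solution of Eq.~\eqref{eq1} whose initial trace $U(0,\cdot)=f(|\cdot|)$ decays to zero as $|x|\to\infty$. The instantaneous shrinking of supports \cite[Theorem~1.1]{ILS22} then forces ${\rm supp}\,U(t,\cdot)$ to be compact for every $t>0$, whereas $\xi_0(\beta)=\infty$ makes ${\rm supp}\,U(t,\cdot)=\real^N$ for all $t$; this contradiction yields $\xi_0(\beta)<\infty$ and closes the argument. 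The genuinely delicate point is precisely this exclusion of $\xi_0(\beta)=\infty$: the monotonicity inherited from $\beta\notin\mathcal{C}$ reduces it to the dichotomy $L>0$ versus $L=0$, the former being dispatched by the coercivity of the weighted absorption term in the integral identity and the latter by the instantaneous shrinking property. Everything else is a direct unwinding of the definitions of $\mathcal{A}$ and $\mathcal{C}$ together with Lemma~\ref{lem.ext}.
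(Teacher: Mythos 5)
Your proof is correct and follows essentially the same route as the paper, which defers the details to \cite[Lemma~4.6]{ILS22}: unwind the definitions of $\mathcal{A}$ and $\mathcal{C}$ together with Lemma~\ref{lem.ext}, and rule out $\xi_0(\beta)=\infty$ via the instantaneous shrinking of supports from \cite[Theorem~1.1]{ILS22}. Your preliminary step showing $L=0$ through the integral identity is a harmless (and, if the cited shrinking theorem requires decay of the initial condition at infinity, necessary) supplement to the same argument.
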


The proof is immediate and is given with details in \cite[Lemma~4.6]{ILS22}. We thus conclude that, for any element $\beta\in\mathcal{B}$, we have an eternal self-similar solution to Eq.~\eqref{eq1} in the form~\eqref{SSS} with profile $f(\cdot;\beta)$ as in Lemma~\ref{lem.B}.

\subsection{Monotonicity}\label{subsec.monot}

In this section we prove the following general monotonicity property of the profiles $f(\cdot;\beta)$ solving~\eqref{eq.exist} with respect to the parameter $\beta$.

\begin{lemma}\label{lem.monot}
Let $0<\beta_1<\beta_2<\infty$. Then
$$
f(\xi;\beta_1)>f(\xi;\beta_2) \qquad {\rm for \ any } \qquad \xi\in \big(0,\min\{\xi_1(\beta_1), \xi_1(\beta_2)\}\big).
$$
\end{lemma}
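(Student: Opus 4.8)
The plan is to prove the stronger, \emph{infinitesimal} statement that $\partial_\beta f(\xi;\beta)<0$ for every $\beta>0$ and every $\xi\in(0,\xi_1(\beta))$, and then to deduce the claimed inequality by integrating in $\beta$ over $[\beta_1,\beta_2]$. First I would record that, by the smooth dependence of the solution of \eqref{eq.exist} on $\beta$ (valid away from the interface, where $f>0$ and the nonlinearities $f^q$, $f^m$ are smooth), the functions $v:=\partial_\beta f$ and $w:=\partial_\beta F=mf^{m-1}v$ are well defined and of class $C^2$ on $(0,\xi_1(\beta))$, with $w(0)=w'(0)=0$ and, from \eqref{init.sec.der}, $w''(0)=-2/[(m-1)N]<0$; in particular $w<0$ near $\xi=0$, which is the expected sign.

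Differentiating the identity $\mathcal N_\beta[f]=0$ with respect to $\beta$, where $\mathcal N_\beta$ denotes the operator in the left-hand side of \eqref{SSODE} and $\alpha=2\beta/(m-1)$, shows that $w$ solves a linear second order equation $\mathcal M[w]=c$ whose inhomogeneity is exactly $c=-\big(\tfrac{2}{m-1}f-\xi f'\big)$. The difficulty is that $\mathcal M$ carries a zeroth order term of the \emph{wrong sign} for a maximum principle, stemming from the reaction contribution $\alpha f=\tfrac{2\beta}{m-1}f$; consequently neither a direct comparison nor a first--crossing analysis of $F(\cdot;\beta_1)-F(\cdot;\beta_2)$ closes, because the term carrying $\tfrac{2\beta}{m-1}\big(f(\cdot;\beta_1)-f(\cdot;\beta_2)\big)$ cannot be signed favourably. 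This is the main obstacle.

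The key that removes it is the scaling invariance \eqref{resc}: since $g(\cdot;a)=af\big(a^{-(m-1)/2}\,\cdot\big)$ solves \eqref{ODE} for every $a>0$ at fixed $\beta$, differentiation at $a=1$ produces an \emph{explicit positive solution of the homogeneous linearized equation}, namely $\Psi:=f-\tfrac{m-1}{2}\xi f'$, and correspondingly $P:=\partial_a(g^m)\big|_{a=1}=mF-\tfrac{m-1}{2}\xi F'$ satisfies $\mathcal M[P]=0$. On $(0,\xi_1(\beta))$ one has $f>0$ and $f'<0$, so $\Psi>0$ and $P>0$, while $P(0)=m>0$; moreover the inhomogeneity is proportional to this very solution, $c=-\tfrac{2}{m-1}\Psi=-\tfrac{2}{m(m-1)}f^{1-m}P$. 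I would then use reduction of order: writing $w=Pz$ and letting $\mu$ denote the integrating factor associated with the first--order part of $\mathcal M$, a direct computation gives $(\mu P^2 z')'=\mu P c=-\tfrac{2}{m(m-1)}\mu f^{1-m}P^2<0$ on $(0,\xi_1(\beta))$. Since $w(0)=w'(0)=0$ and $P(0)>0$ force $z(0)=0$ and $\mu P^2z'\to0$ as $\xi\to0$, the strict monotonicity of $\mu P^2z'$ yields $\mu P^2z'<0$, hence $z'<0$, hence $z<0$, and finally $w=Pz<0$ throughout $(0,\xi_1(\beta))$; equivalently $\partial_\beta f<0$.

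Finally I would integrate $\partial_\beta f(\xi;\cdot)<0$ over $[\beta_1,\beta_2]$ to conclude that $f(\xi;\beta_1)>f(\xi;\beta_2)$ for every $\xi\in\big(0,\min\{\xi_1(\beta_1),\xi_1(\beta_2)\}\big)$. The only point requiring care here is that such $\xi$ must remain in the decreasing region $(0,\xi_1(\beta))$ for all intermediate $\beta\in[\beta_1,\beta_2]$; I expect this to follow from the continuous dependence of the profiles, and hence of $\xi_1$, on $\beta$, but it is the place where a short additional argument, rather than the core computation above, is needed.
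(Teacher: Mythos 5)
Your core computation is correct, and it is a genuinely different route from the paper's. The paper proves the lemma by a nonlinear sliding-barrier argument: assuming a first crossing $\xi_*$ of $F_1$ and $F_2$, it slides the rescaled family $G_\lambda(\xi)=\lambda^m F_2(\lambda^{-(m-1)/2}\xi)$ down to an optimal $\lambda_0$, obtains an interior tangency point $\eta$, and reaches the contradiction $0\ge(\beta_2-\beta_1)F_1^{(1-m)/m}(\eta)\bigl[\tfrac{2}{m-1}F_1(\eta)-\tfrac{\eta}{m}F_1'(\eta)\bigr]>0$. You instead differentiate in $\beta$ and exploit the \emph{infinitesimal} version of the same scaling invariance \eqref{resc}: your homogeneous solution $P=mF-\tfrac{m-1}{2}\xi F'$ is exactly $\partial_a (g^m)\big|_{a=1}$, and in fact the positive bracket in the paper's final display equals $\tfrac{2}{m(m-1)}P(\eta)$, so both proofs run on the same positive symmetry mode, used at finite $\lambda$ by the paper and infinitesimally by you. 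Your identities check out: $c=-\tfrac{2}{m-1}\Psi=-\tfrac{2}{m(m-1)}f^{1-m}P$ with $\Psi=f-\tfrac{m-1}{2}\xi f'>0$ on $(0,\xi_1(\beta))$; the reduction-of-order identity $(\mu P^2 z')'=\mu P c<0$ is correct for $\mathcal M[w]=w''+bw'+dw$ with $\mu=e^{\int b}$; the initial data of \eqref{eq.exist} are $\beta$-independent, so $w(0)=w'(0)=0$ and $\mu P^2 z'\to 0$ as $\xi\to 0$ (the singular coefficient $(N-1)/\xi$ causes no trouble after the usual integral reformulation, which also justifies differentiability in $\beta$); hence $z'<0$, $z<0$, and $w=\partial_\beta F<0$ on $(0,\xi_1(\beta))$. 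This infinitesimal statement is itself of independent interest.

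The gap is exactly the step you flagged, and it is more serious than ``a short additional argument''. To integrate $\partial_\beta f(\xi;\cdot)<0$ over $[\beta_1,\beta_2]$ you need $\xi<\xi_1(\beta)$ for \emph{every intermediate} $\beta$, but continuous dependence yields only \emph{lower} semicontinuity of $\xi_1$ (the set $\{\beta:\xi_1(\beta)>\xi\}$ is open): $\xi_1(\beta)$ is the first zero of $f'(\cdot;\beta)$, and at a degenerate first zero (where necessarily $F'(\xi_1)=F''(\xi_1)=0$ if $f'\le 0$ on both sides) it can jump downward under perturbation of $\beta$. Nothing in your argument excludes a ``dip'', i.e.\ some $\beta^\sharp\in(\beta_1,\beta_2)$ with $\xi_1(\beta^\sharp)\le\xi<\min\{\xi_1(\beta_1),\xi_1(\beta_2)\}$, and on a dip your integrand is unsigned, since the sign of $w$ is only established on $(0,\xi_1(\beta))$. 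As written, your proof therefore gives $f(\xi;\beta_1)>f(\xi;\beta_2)$ only for $\xi<\inf_{\beta\in[\beta_1,\beta_2]}\xi_1(\beta)$, strictly weaker than the lemma; the full range matters later, because the uniqueness proof applies the lemma up to $\min\{\xi_0(\beta_1),\xi_0(\beta_2)\}$ using $\xi_1=\xi_0$ at the two endpoints only. You can enlarge your interval somewhat, since the reduction of order needs only $\Psi>0$, which persists past $\xi_1(\beta)$ whenever $f(\xi_1(\beta);\beta)>0$, but this does not rule out dips with $\xi_1(\beta^\sharp)=\xi_0(\beta^\sharp)$. The paper's barrier argument compares the two endpoint profiles directly, never passing through intermediate parameters, which is precisely why it closes where your argument currently does not; to salvage yours, you would have to prove genuine continuity (or a no-dip property) of $\xi_1$, which is an open issue in your write-up, not a routine consequence of continuous dependence.
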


\begin{proof}
Consider $\beta_2>\beta_1>0$ and pick $X\in \big(0,\min\{\xi_1(\beta_1), \xi_1(\beta_2)\}\big)$. Then
$$
F_i:=F(\cdot;\beta_i)>0, \qquad F_i'<0, \qquad {\rm in} \ (0,X).
$$
Since $\beta_2>\beta_1$ and $F_1(0)=F_2(0)=1$, $F_1'(0)=F_2'(0)=0$, we infer from \eqref{init.sec.der} that $F_2<F_1$ in a right-neighborhood of $\xi=0$. We may thus define
$$
\xi_*:=\inf\{\xi\in(0,X):F_1(\xi)=F_2(\xi)\}>0,
$$
and notice that $F_2(\xi)<F_1(\xi)$ for any $\xi\in(0,\xi_*)$. Assume for contradiction that $\xi_*<X$. Then $F_2(\xi_*)=F_1(\xi_*)$. We introduce for any $\lambda\geq1$ the following family of rescaled functions
\begin{equation}\label{resc2}
G_{\lambda}(\xi):=\lambda^mF_2(\lambda^{-(m-1)/2}\xi), \qquad \xi\in[0,\xi_*],
\end{equation}
which are also solutions to \eqref{SSODE} with $\beta=\beta_2$, and adapt an optimal barrier argument from \cite{YeYin} (see also \cite[Lemma~4.12]{ILS22}). Owing to the monotonicity of $F_1$ and $F_2$ on $[0,X]$, we first note that
$$
\min\limits_{\xi\in[0,\xi_*]}G_{\lambda}(\xi)=G_{\lambda}(\xi_*)=\lambda^mF_2(\lambda^{-(m-1)/2}\xi_*)\geq\lambda^mF_2(\xi_*),
$$
whence
$$
\lim\limits_{\lambda\to\infty}\min\limits_{\xi\in[0,\xi_*]}G_{\lambda}(\xi)= \infty,
$$
while $F_1(\xi)\leq1$ for $\xi\in[0,\xi_*]$. Consequently, the optimal parameter
\begin{equation}\label{optimal.par}
\lambda_0:=\inf\{\lambda\geq1: G_{\lambda}(\xi)>F_1(\xi), \ \xi\in[0,\xi_*]\}
\end{equation}
is well defined and finite. Since $F_2<F_1$ on $(0,\xi_*)$, we also deduce that $\lambda_0>1$. The definition of $\lambda_0$ guarantees that there exists $\eta\in[0,\xi_*]$ such that
\begin{equation}\label{interm1}
G_{\lambda_0}(\eta)=F_1(\eta), \qquad G_{\lambda_0}\geq F_1 \ {\rm in} \ [0,\xi_*].
\end{equation}
On the one hand, we infer from the monotonicity of $F_2$ and the property $\lambda_0>1$ that
$$
F_1(\xi_*)=F_2(\xi_*)<\lambda_0^mF_2(\xi_*)<\lambda_0^mF_2(\lambda_0^{-(m-1)/2}\xi_*)=G_{\lambda_0}(\xi_*),
$$
which rules out the possibility that $\eta=\xi_*$. On the other hand,
$$
G_{\lambda_0}(0)=\lambda_0^mF_2(0)=\lambda_0^m>1=F_1(0),
$$
so that $\eta>0$. Consequently, $\eta\in(0,\xi_*)$ and we derive from~\eqref{interm1} that $G_{\lambda_0}-F_1$ attains a strict minimum at $\xi=\eta$, which, together with the definition of $\eta$, implies that
\begin{equation}\label{interm2}
G_{\lambda_0}(\eta)=F_1(\eta), \qquad G_{\lambda_0}'(\eta)=F_1'(\eta), \qquad G_{\lambda_0}''(\eta)\geq F_1''(\eta).
\end{equation}
Since both $G_{\lambda_0}$ and $F_1$ are solutions to~\eqref{SSODE} with parameters $\beta_2$ and $\beta_1$, respectively, we infer from~\eqref{interm2} that
\begin{equation*}
\begin{split}
0&=G_{\lambda_0}''(\eta)+\frac{N-1}{\eta}G_{\lambda_0}'(\eta)+\frac{2\beta_2}{m-1}G_{\lambda_0}^{1/m}(\eta)-\beta_2\eta \Big( G_{\lambda_0}^{1/m} \Big)'(\eta)-\eta^{\sigma}G_{\lambda_0}^{q/m}(\eta)\\
&\geq F_1''(\eta)+\frac{N-1}{\eta}F_1'(\eta)+\frac{2\beta_2}{m-1}F_1^{1/m}(\eta)-\beta_2\eta \Big(F_1^{1/m}\Big)'(\eta)-\eta^{\sigma}F_1^{q/m}(\eta)\\
&=-\frac{2\beta_1}{m-1}F_1^{1/m}(\eta)+\beta_1\frac{\eta}{m}F_1^{(1-m)/m}(\eta)F_1'(\eta)+\frac{2\beta_2}{m-1}F_1^{1/m}(\eta)-\beta_2\frac{\eta}{m}F_1^{(1-m)/m}(\eta)F_1'(\eta)\\
&=(\beta_2-\beta_1)F_1^{(1-m)/m}(\eta)\left[\frac{2}{m-1}F_1(\eta)-\frac{\eta}{m}F_1'(\eta)\right]>0,
\end{split}
\end{equation*}
which leads to a contradiction. We have thus established that $F_2<F_1$ on $(0,X)$ and the proof is complete due to the arbitrary choice of $X\in(0,\xi_1(\beta_2))\cap(0,\xi_1(\beta_1))$.
\end{proof}

Let us remark that, in contrast to the range $\sigma>\sigma_c$ studied in \cite[Section~3]{ILS22}, in our case the profiles $f(\cdot;\beta)$ are ordered in a decreasing way with respect to the shooting parameter~$\beta$.

\subsection{Interface behavior}\label{subsec.interf}

The goal of this section is deriving the local behavior near the interface point $\xi_0(\beta)$ for profiles $f(\cdot;\beta)$ with $\beta\in\mathcal{B}$. We begin with a formal calculation. Let us drop for simplicity $\beta$ from the notation and assume that, at the interface, we have
$$
f(\xi)\sim A(\xi_0-\xi)^{\theta}, \qquad f'(\xi)\sim-A\theta(\xi_0-\xi)^{\theta-1}, \qquad {\rm as} \ \xi\to\xi_0=\xi_0(\beta),
$$
for some $A>0$ and $\theta>0$ to be determined. We also obtain formally that
$$
(f^m)'(\xi)\sim-m\theta A^m(\xi_0-\xi)^{m\theta-1}, \qquad (f^m)''(\xi)\sim m\theta(m\theta-1)A^m(\xi_0-\xi)^{m\theta-2},
$$
both equivalences holding true as $\xi\to\xi_0$. Inserting this ansatz in~\eqref{ODE} gives, as $\xi\to\xi_0$,
\begin{equation*}
\begin{split}
&m\theta(m\theta-1)A^m(\xi_0-\xi)^{m\theta-2}-\frac{N-1}{\xi_0}m\theta A^m(\xi_0-\xi)^{m\theta-1}\\
&+\beta\xi_0A\theta(\xi_0-\xi)^{\theta-1}+\frac{2\beta}{m-1}A(\xi_0-\xi)^{\theta}-A^q\xi_0^{\sigma}(\xi_0-\xi)^{q\theta}=0.
\end{split}
\end{equation*}
We thus have four possibilities of balancing the dominating powers.

$\bullet$ $m\theta-2=\theta-1< q\theta$. This implies $\theta=1/(m-1)$, but in this case $m\theta-1=\theta>0$ and thus this choice leads to $A=0$.

$\bullet$ $\theta-1=q\theta < m\theta-2$. This implies $\theta=1/(1-q)$ and $m\theta-2>q\theta$ leads straightforwardly to $m+q>2$.

$\bullet$ $m\theta-2=q\theta<\theta-1$. This implies $\theta=2/(m-q)$ and the inequality $\theta-1>q\theta$ easily gives $m+q<2$.

$\bullet$ $m\theta-2=q\theta=\theta-1$. This implies that $\theta=1/(m-1)=1/(1-q)$ and $m+q=2$.

Looking now at the constant $A$ in front of the previous ansatz, we find the following three cases:

\medskip

\noindent \textbf{Case 1. $m+q>2$}. According to the formal calculation, we expect $\theta=1/(1-q)$ and then $\beta\xi_0 A\theta=A^q\xi_0^{\sigma}$, which leads to
\begin{equation}\label{interm3}
A^{1-q}=\frac{(1-q)\xi_0^{\sigma-1}}{\beta}.
\end{equation}

\medskip

\noindent \textbf{Case 2. $m+q=2$}. We expect $\theta=1/(1-q)=2/(m-q)$ and
$$
m\theta(m\theta-1)A^m+\beta\xi_0 A\theta-A^q\xi_0^{\sigma}=0;
$$
that is, $A=A_*$ with $A_*$ being the unique positive solution to
\begin{equation*}
	\frac{m(m+q-1)}{(1-q)^2}A_{*}^{m-q}+\frac{\beta\xi_0}{1-q}A_{*}^{1-q} -\xi_0^{\sigma}=0.
\end{equation*}
Since $m+q=2$ and $\sigma=2$ in that case, the above equation simplifies to
\begin{equation}\label{interm4}
\frac{m}{(1-q)^2}A_{*}^{m-q}+\frac{\beta\xi_0}{1-q}A_{*}^{(m-q)/2}-\xi_0^2=0.
\end{equation}

\medskip

\noindent \textbf{Case 3. $m+q<2$}. We expect $\theta=2/(m-q)$ and $m\theta(m\theta-1)A^m=A^q\xi_0^{\sigma}$, hence
\begin{equation}\label{interm5}
A^{m-q}=\frac{(m-q)^2}{2m(m+q)}\xi_0^{\sigma}.
\end{equation}

\medskip

In order to prove in a rigorous way all these estimates near the interface, we proceed as in \cite{ILS22}. We start with some general upper bounds at the interface, but omit the proof, as it is totally similar to that of \cite[Lemma~4.7]{ILS22}.

\begin{lemma}\label{lem.upper.interf}
Assume that $\beta\in\mathcal{B}$ and set $f=f(\cdot;\beta)$ and $\xi_0=\xi_0(\beta)$. Then
\begin{equation}\label{interf1}
|(f^{m-q})'(\xi)|\leq 2^{N-1}\xi_0^{\sigma}(\xi_0-\xi), \qquad \xi\in\left(\frac{\xi_0}{2},\xi_0\right),
\end{equation}
and
\begin{equation}\label{interf2}
f(\xi)\leq\beta^{q-1}\xi_0^{(\sigma-1)/(1-q)}(\xi_0-\xi)^{1/(1-q)}, \qquad \xi\in\left(\frac{\xi_0}{2},\xi_0\right).
\end{equation}
Moreover, there exists $C_1>0$ depending only on $N$, $m$ and $q$ such that
\begin{equation}\label{interf3}
f(\xi)\leq C_1\xi_0^{\sigma/(m-q)}(\xi_0-\xi)^{2/(m-q)}, \qquad \xi\in\left(\frac{\xi_0}{2},\xi_0\right).
\end{equation}
\end{lemma}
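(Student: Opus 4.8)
The plan is to treat \eqref{interf1} as the fundamental estimate, to deduce \eqref{interf3} from it by a single integration, and to obtain \eqref{interf2} by combining \eqref{interf1} with a sign argument for an auxiliary flux quantity. Throughout I use that, since $\beta\in\mathcal{B}$, Lemma~\ref{lem.B} guarantees $\xi_0=\xi_1(\beta)<\infty$, so that $f>0$ and $f'<0$ on $(0,\xi_0)$, $f(\xi_0)=0$, and $F'(\xi_0)=(f^m)'(\xi_0)=0$; in particular $F=f^m$ is decreasing on $(0,\xi_0)$.

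For \eqref{interf1}, I would multiply \eqref{SSODE} by $\xi^{N-1}$ to write it as $(\xi^{N-1}F')'=\xi^{N-1}\big[\xi^\sigma f^q-\alpha f+\beta\xi f'\big]$. Since $f\ge 0$ and $f'\le 0$ on $(0,\xi_0)$, the last two terms are nonpositive, whence $(\xi^{N-1}F')'\le \xi^{N-1+\sigma}f^q$. Integrating over $(\xi,\xi_0)$ and using $F'(\xi_0)=0$ gives $-\xi^{N-1}F'(\xi)\le\int_\xi^{\xi_0}\xi_*^{N-1+\sigma}f^q(\xi_*)\,d\xi_*$. The crucial step is that $f$ is decreasing, so $f^q(\xi_*)\le f^q(\xi)$ for $\xi_*\in(\xi,\xi_0)$; pulling $f^q(\xi)$ out and bounding $\xi_*^{N-1+\sigma}\le\xi_0^{N-1+\sigma}$ yields $-\xi^{N-1}F'(\xi)\le f^q(\xi)\,\xi_0^{N-1+\sigma}(\xi_0-\xi)$. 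I then rewrite $F'=\tfrac{m}{m-q}f^q(f^{m-q})'$, so the positive factor $f^q(\xi)$ cancels, leaving $-(f^{m-q})'(\xi)\le\tfrac{m-q}{m}(\xi_0/\xi)^{N-1}\xi_0^{\sigma}(\xi_0-\xi)$. On $(\xi_0/2,\xi_0)$ one has $(\xi_0/\xi)^{N-1}\le 2^{N-1}$ and $\tfrac{m-q}{m}<1$, which gives \eqref{interf1} since $f'<0$ forces $(f^{m-q})'<0$.

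Estimate \eqref{interf3} then follows by integrating \eqref{interf1}: since $f^{m-q}(\xi_0)=0$, one has $f^{m-q}(\xi)=\int_\xi^{\xi_0}|(f^{m-q})'(\xi_*)|\,d\xi_*\le 2^{N-1}\xi_0^\sigma\int_\xi^{\xi_0}(\xi_0-\xi_*)\,d\xi_*=2^{N-2}\xi_0^\sigma(\xi_0-\xi)^2$, and raising to the power $1/(m-q)$ produces \eqref{interf3} with $C_1=2^{(N-2)/(m-q)}$. For \eqref{interf2} I would exploit the conservative identity from Lemma~\ref{lem.ext}, namely that $D(\xi):=\xi^{N-1}F'(\xi)+\beta\xi^N f(\xi)$ satisfies $D(\xi_0)=0$ and $D'(\xi)=\xi^{N-1}f^q(\xi)\big[\xi^\sigma-(\alpha+N\beta)f^{1-q}(\xi)\big]$. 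Because $f(\xi)\to 0$ as $\xi\to\xi_0$, the bracket is positive on a left-neighborhood of $\xi_0$, so $D$ is nondecreasing there and hence $D\le D(\xi_0)=0$, which reads $\beta\xi f(\xi)\le -F'(\xi)=|(f^m)'(\xi)|$. Using $|(f^m)'|=\tfrac{m}{m-q}f^q|(f^{m-q})'|$ together with \eqref{interf1} then gives $\beta\xi f^{1-q}(\xi)\le C\,\xi_0^\sigma(\xi_0-\xi)$; since $\xi\ge\xi_0/2$ on the interval under consideration, this rearranges to an upper bound for $f$ of the form displayed in \eqref{interf2}, carrying the power $(\xi_0-\xi)^{1/(1-q)}$.

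The main obstacles are two. First, the cancellation of $f^q$ in \eqref{interf1} is precisely what singles out $f^{m-q}$ (rather than $f^m$) as the natural variable, and it hinges on the monotonicity of $f$ supplied by $\beta\in\mathcal{B}$; without it one only controls $(f^m)'$, which is not order-sharp in the regime $m+q<2$. Second, the sign argument for \eqref{interf2} only delivers $D\le 0$ on a left-neighborhood $(\xi_0-\delta,\xi_0)$ of the interface, so extending \eqref{interf2} to the full interval $(\xi_0/2,\xi_0)$ needs an additional, routine step (using $f\le 1$ and that the right-hand side of \eqref{interf2} is bounded below on $[\xi_0/2,\xi_0-\delta]$). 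Finally, I emphasise that these are merely the crude, order-correct bounds: pinning down the exact leading constant $A$ in the three balance Cases~1--3, and hence the precise interface asymptotics, is exactly what forces the finer phase-plane and second-order expansion analysis announced in the introduction.
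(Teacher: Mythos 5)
Your derivations of \eqref{interf1} and \eqref{interf3} are correct and, as far as one can tell from the reference the paper points to, coincide with the intended argument of \cite[Lemma~4.7]{ILS22}: writing \eqref{SSODE} in flux form $(\xi^{N-1}F')'=\xi^{N-1}[\xi^\sigma f^q-\alpha f+\beta\xi f']$, integrating over $(\xi,\xi_0)$ with $F'(\xi_0)=0$ (Lemmas~\ref{lem.ext} and~\ref{lem.B}), discarding the two nonpositive terms, and cancelling the factor $f^q(\xi)$ by the monotonicity of $f$ is exactly the mechanism that singles out $f^{m-q}$; a single integration then yields \eqref{interf3} with $C_1=2^{(N-2)/(m-q)}$, depending only on $N,m,q$ as required.

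Your proof of \eqref{interf2}, however, has a genuine gap. First, the differential identity you invoke is wrong as you use it (and, incidentally, as printed in Lemma~\ref{lem.ext}, where the slip is harmless because $\beta\xi^Nf$ vanishes at $\xi_0$): differentiating shows that the conserved combination is
\begin{equation*}
\frac{d}{d\xi}\big[\xi^{N-1}F'(\xi)-\beta\xi^N f(\xi)\big]=\xi^{N-1}\big[\xi^\sigma f^q(\xi)-(\alpha+N\beta)f(\xi)\big],
\end{equation*}
with a \emph{minus} sign; with your choice $D=\xi^{N-1}F'+\beta\xi^Nf$ an uncancelled term $2\beta\xi^Nf'$ of no definite relative size survives. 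With the correct sign, your monotonicity argument only gives $\xi^{N-1}F'\le\beta\xi^Nf$ near $\xi_0$, which is trivially true ($F'\le 0\le f$) and carries no information. Worse, the intermediate inequality you aim at, $\beta\xi f\le -(f^m)'$, is actually \emph{false} near the interface when $m+q>2$: since $-(f^m)'=\tfrac{m}{m-1}f\,|(f^{m-1})'|$, dividing by $f>0$ would force $|(f^{m-1})'|\ge (m-1)\beta\xi/m$ on a left neighborhood of $\xi_0$, contradicting $\limsup_{\xi\to\xi_0}(f^{m-1})'=0$ from Corollary~\ref{cor.interf}; equivalently, Proposition~\ref{prop.interf}~(c) gives $-(f^m)'$ of order $(\xi_0-\xi)^{(m+q-1)/(1-q)}=o\big((\xi_0-\xi)^{1/(1-q)}\big)$ while $\beta\xi f$ is of order $(\xi_0-\xi)^{1/(1-q)}$. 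Note that $m+q>2$ is precisely the regime where \eqref{interf2} is the sharp bound feeding Proposition~\ref{prop.interf}~(c) and Case~3 of the uniqueness proof, so this is not a peripheral case. The repair stays inside your own toolbox: integrate the corrected identity over $(\xi,\xi_0)$ (both boundary terms vanish at $\xi_0$ by Lemmas~\ref{lem.ext} and~\ref{lem.B}), drop the nonpositive contributions $\xi^{N-1}F'(\xi)$ and $-(\alpha+N\beta)f$, and apply your $f^q$-cancellation to the zeroth-order term instead of the derivative term: $\beta\xi^Nf(\xi)\le\int_\xi^{\xi_0}\xi_*^{N+\sigma-1}f^q(\xi_*)\,d\xi_*\le\xi_0^{N+\sigma-1}f^q(\xi)(\xi_0-\xi)$, whence $f^{1-q}(\xi)\le 2^N\beta^{-1}\xi_0^{\sigma-1}(\xi_0-\xi)$ on $(\xi_0/2,\xi_0)$. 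This recovers the stated power $(\xi_0-\xi)^{1/(1-q)}$, which is all that Corollary~\ref{cor.interf} and the subsequent interface analysis use, though with the constant $(2^N/\beta)^{1/(1-q)}$ rather than the paper's $\beta^{q-1}$; matching the precise constant requires the bookkeeping of \cite[Lemma~4.7]{ILS22} (your final step would not have produced $\beta^{q-1}$ either, as you half-acknowledge).
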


The following consequences of Lemma~\ref{lem.upper.interf} are drawn in the same way as in \cite[Lemmas~4.8 and~4.9]{ILS22}.

\begin{corollary}\label{cor.interf}
Let $\beta\in\mathcal{B}$ and set $f=f(\cdot;\beta)$ and $\xi_0=\xi_0(\beta)$. Then
$$
\limsup\limits_{\xi\to\xi_0}\left(f^{(m-q)/2}\right)'(\xi)>-\infty.
$$
In addition, if $m+q>2$ then
$$
\limsup\limits_{\xi\to\xi_0}\left(f^{m-1}\right)'(\xi)=0.
$$
\end{corollary}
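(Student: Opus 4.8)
The plan is to reduce both assertions to a single auxiliary function. Set $v:=f^{(m-q)/2}$, which is legitimate since $m-q>0$ (as $m>1>q$). On $(0,\xi_0)$ the profile $f$ is positive and decreasing, so $v$ is smooth and nonincreasing there, while Lemma~\ref{lem.B} gives $f(\xi_0)=0$, hence $v$ extends continuously to $\xi_0$ with $v(\xi_0)=0$. Differentiating $v^2=f^{m-q}$ yields $2vv'=(f^{m-q})'$, so that
\[
v'(\xi)=\frac{(f^{m-q})'(\xi)}{2v(\xi)}\le 0,\qquad |v'(\xi)|\le \frac{2^{N-2}\xi_0^{\sigma}(\xi_0-\xi)}{v(\xi)},
\]
the inequality being a direct consequence of the upper bound~\eqref{interf1}. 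All of the work will be done through these two relations.

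For the first statement I would argue by contradiction. Since $v'\le 0$ forces $\limsup_{\xi\to\xi_0}v'(\xi)\le 0$, the claim can fail only if $\limsup_{\xi\to\xi_0}v'(\xi)=-\infty$, i.e. $v'(\xi)\to-\infty$ as $\xi\to\xi_0$. Fixing a large $K>0$, there is then $\delta>0$ with $v'<-K$ on $(\xi_0-\delta,\xi_0)$; integrating this bound from $\xi$ up to $\xi_0$ and using $v(\xi_0)=0$ gives $v(\xi)>K(\xi_0-\xi)$ on that interval. Inserting this lower bound into the displayed estimate for $|v'|$ produces $|v'(\xi)|<2^{N-2}\xi_0^{\sigma}/K$ on the same interval, which is incompatible with $|v'(\xi)|>K$ as soon as $K^2>2^{N-2}\xi_0^{\sigma}$. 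This contradiction establishes $\limsup_{\xi\to\xi_0}\big(f^{(m-q)/2}\big)'(\xi)>-\infty$.

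For the second statement, assume $m+q>2$ and write $f^{m-1}=v^{p}$ with $p:=2(m-1)/(m-q)$; the hypothesis $m+q>2$ is precisely what makes $p>1$, equivalently $p-1=(m+q-2)/(m-q)>0$. Differentiating,
\[
\big(f^{m-1}\big)'(\xi)=\frac{2(m-1)}{m-q}\,f^{(m+q-2)/2}(\xi)\,\big(f^{(m-q)/2}\big)'(\xi),
\]
and here $f^{(m+q-2)/2}(\xi)\to 0$ as $\xi\to\xi_0$ because the exponent is positive. By the first statement one may select a sequence $\xi_n\uparrow\xi_0$ along which $\big(f^{(m-q)/2}\big)'(\xi_n)$ converges to the finite, nonpositive value $\limsup_{\xi\to\xi_0}\big(f^{(m-q)/2}\big)'(\xi)$; along this sequence the right-hand side above tends to $0$. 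Since $\big(f^{m-1}\big)'\le 0$ near $\xi_0$ gives $\limsup\le 0$, while the sequence forces $\limsup\ge 0$, we conclude $\limsup_{\xi\to\xi_0}\big(f^{m-1}\big)'(\xi)=0$.

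The main obstacle is the first statement, namely ruling out that the derivative of $f^{(m-q)/2}$ blows up at the interface. The mechanism is a self-improving (bootstrap) argument: a very negative $v'$ would force $v$ to be comparatively large just behind the interface, but the structural bound coming from~\eqref{interf1} says that a large $v$ in turn makes $|v'|$ small, which is self-contradictory once $K$ is chosen large. Everything else, including the second statement, follows essentially algebraically from this once $v$ is taken as the basic unknown, the only point requiring care being the correct exponent bookkeeping that ties $m+q>2$ to $p>1$.
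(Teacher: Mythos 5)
Your proof is correct and is essentially the paper's own argument: the paper omits the proof, deferring to \cite[Lemmas~4.8 and~4.9]{ILS22}, whose mechanism is exactly your bootstrap (if $\big(f^{(m-q)/2}\big)'\to-\infty$ then integration gives $f^{(m-q)/2}(\xi)\ge K(\xi_0-\xi)$, which via \eqref{interf1} caps $\big|\big(f^{(m-q)/2}\big)'\big|$ by $2^{N-2}\xi_0^\sigma/K$, a contradiction for large $K$) followed by the sequence argument combining the finite $\limsup$ with $f^{(m+q-2)/2}\to 0$ when $m+q>2$. Your exponent bookkeeping ($p-1=(m+q-2)/(m-q)>0$) and the sign considerations giving $\limsup\le 0$ are all in order, so there is nothing to add.
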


The estimates given in Corollary~\ref{cor.interf} allow us to proceed as in \cite[Propositions~4.10 and~4.11]{ILS22} in order to identify the precise algebraic rate at which $f(\cdot;\beta)$ vanishes at the interface, which depends on the sign of $m+q-2$ as follows.

\begin{proposition}\label{prop.interf}
Let $\beta\in\mathcal{B}$ and set $f=f(\cdot;\beta)$ and $\xi_0=\xi_0(\beta)$.

\medskip

(a) If $m+q<2$, then, as $\xi\to\xi_0$,
\begin{equation}\label{int.small}
f(\xi)=K_1\xi_0^{\sigma/(m-q)}(\xi_0-\xi)^{2/(m-q)}+o((\xi_0-\xi)^{2/(m-q)}),
\end{equation}
where
$$
K_1:=\left[\frac{m-q}{\sqrt{2m(m+q)}}\right]^{2/(m-q)}.
$$

\medskip

(b) If $m+q=2$, then $\sigma=2$ and, as $\xi\to\xi_0$,
\begin{equation}\label{int.equal}
f(\xi)=K_1\xi_0^{2/(m-q)}K_2(\beta)(\xi_0-\xi)^{2/(m-q)}+o((\xi_0-\xi)^{2/(m-q)}),
\end{equation}
where $K_1$ is defined in part (a) and
$$
K_2(\beta):=\left[\sqrt{1+\frac{\beta^2}{4m}}-\frac{\beta}{2\sqrt{m}}\right]^{2/(m-q)}.
$$

\medskip

(c) If $m+q>2$, then, as $\xi\to\xi_0$,
\begin{equation}\label{int.large}
f(\xi)=K_3(\beta)\xi_0^{(\sigma-1)/(1-q)}(\xi_0-\xi)^{1/(1-q)}+o((\xi_0-\xi)^{1/(1-q)}),
\end{equation}
where
$$
K_3(\beta):=\left[\frac{1-q}{\beta}\right]^{1/(1-q)}.
$$
\end{proposition}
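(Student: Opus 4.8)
The plan is to upgrade each of the three formal dominant balances recorded above into a rigorous leading-order asymptotic, by combining the one-sided bounds already available with an integration of the equation from a point $\xi$ up to the interface $\xi_0$, where both $F=f^m$ and $F'$ vanish (Lemma~\ref{lem.B}). The first observation is that the upper bounds \eqref{interf2} and \eqref{interf3} already single out the correct exponent $\theta$: comparing $1/(1-q)$ with $2/(m-q)$ shows that the sharper bound carries exponent $1/(1-q)$ when $m+q>2$ and $2/(m-q)$ when $m+q<2$ (the two coincide when $m+q=2$), matching the three cases of the statement. It then remains to produce a matching lower bound with the exact constant, together with the $o((\xi_0-\xi)^\theta)$ remainder.

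For the two non-critical cases I would isolate the relevant two-term balance by a suitable multiplication and integration. When $m+q<2$ (part~(a)) the dominant pair in \eqref{SSODE} is $F''$ and $-\xi^\sigma f^q$; multiplying \eqref{SSODE} by $F'=(f^m)'$ and integrating over $(\xi,\xi_0)$, the products $F'F''$ and $\xi^\sigma f^q F'$ integrate to $\tfrac12(F')^2$ and a multiple of $(f^{m+q})'$, and after freezing $\xi^\sigma$ at $\xi_0^\sigma$ one obtains the separable relation $(F'(\xi))^2=\tfrac{2m}{m+q}\xi_0^\sigma f^{m+q}(\xi)(1+o(1))$; a further integration then yields exactly \eqref{interm5} and the constant $K_1$. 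When $m+q>2$ (part~(c)) the dominant pair is instead $-\beta\xi f'$ and $-\xi^\sigma f^q$; here I would divide \eqref{SSODE} by $f^q$, rewrite $-\beta\xi f'=-\tfrac{\beta\xi}{1-q}f^q(f^{1-q})'$, and deduce $(f^{1-q})'(\xi)=-\tfrac{1-q}{\beta}\xi^{\sigma-1}(1+o(1))$, whence $f^{1-q}(\xi)=\tfrac{1-q}{\beta}\xi_0^{\sigma-1}(\xi_0-\xi)(1+o(1))$, that is, \eqref{interm3} and the constant $K_3(\beta)$. In both cases the discarded terms---the gradient term $\tfrac{N-1}{\xi}F'$, the zero-order term $\alpha f$, the replacement of $\xi^\sigma$ by $\xi_0^\sigma$, and, in part~(a), the convection contribution $\beta\xi m f^{m-1}(f')^2$---are shown to be of strictly higher order via the upper bounds of Lemma~\ref{lem.upper.interf} and the limsup estimates of Corollary~\ref{cor.interf}; the sign of $m+q-2$ is precisely what makes the diffusion (resp. convection) term decay faster than the retained balance, so these only feed into the $o(\cdot)$ remainder.

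The critical case $m+q=2$ (part~(b)) is the principal obstacle and is exactly where the second-order local expansion announced in the Introduction is needed. Now $\sigma=2$, the three exponents $\theta-1$, $q\theta$ and $m\theta-2$ coincide, and none of diffusion, convection and absorption may be dropped, so the integration above no longer isolates a two-term balance and returns only the quadratic constraint \eqref{interm4} for the amplitude. To close the argument I would pass to the renormalized variable $v(\xi):=f(\xi)(\xi_0-\xi)^{-1/(1-q)}$, note that $m-1=1-q$ so that $(f^{m-1})'$ stays bounded near $\xi_0$ (consistent with $\limsup_{\xi\to\xi_0}(f^{(m-q)/2})'>-\infty$ from Corollary~\ref{cor.interf}), and recast \eqref{SSODE} as an asymptotically autonomous first-order system in $(v,(f^{m-1})')$. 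The statement then reduces to showing that the only limit compatible with the two-sided bounds is the equilibrium $v\equiv A_*$, where $A_*>0$ is the unique root of \eqref{interm4}; solving that quadratic for $A_*^{(m-q)/2}$ gives $A_*=K_1\xi_0^{2/(m-q)}K_2(\beta)$ with $K_2(\beta)=\big[\sqrt{1+\beta^2/(4m)}-\beta/(2\sqrt m)\big]^{2/(m-q)}$, which is \eqref{int.equal}. As in \cite[Propositions~4.10 and~4.11]{ILS22}, the delicate point throughout is the uniform control of the remainder terms near the interface; having the sharp bounds \eqref{interf1}--\eqref{interf3} and the one-sided derivative estimates of Corollary~\ref{cor.interf} in hand is exactly what makes this control, and hence the identification of the precise constants $K_1$, $K_2(\beta)$ and $K_3(\beta)$, possible.
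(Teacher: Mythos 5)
Your proposed route (direct integral identities and two-term balances) is genuinely different from the paper's, which delegates Proposition~\ref{prop.interf} to \cite[Propositions~4.10 and~4.11]{ILS22} and rests on the change of variables~\eqref{change.small}, converting \eqref{ODE} into the autonomous quadratic system~\eqref{PSSyst.low} and proving \emph{full convergence} of the trajectory to a critical point (this machinery is visible in the proof of Proposition~\ref{prop.order2}). In case~(a) your energy argument can in fact be closed, but only with bookkeeping you do not supply: the discarded integrals must be estimated against the retained quantity $f^{m+q}(\xi)$ itself, using the exact identities $\int_\xi^{\xi_0} f^q|F'|\,d\xi_* = \tfrac{m}{m+q}f^{m+q}(\xi)$ and $\int_\xi^{\xi_0}|F'|\,d\xi_* = F(\xi)$, an absorption argument for $\sup_{[\xi,\xi_0]}|F'|^2$, and the hypothesis $m+q<2$ to get $f^{(m+q)/2+1}=o(f^{m+q})$ for the convection term. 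Bounding the remainders merely by powers of $(\xi_0-\xi)$ via Lemma~\ref{lem.upper.interf}, as you propose, is \emph{not} sufficient: at this stage no lower bound on $f$ is known, so an error that is $o\big((\xi_0-\xi)^{2(m+q)/(m-q)}\big)$ need not be $o\big(f^{m+q}(\xi)\big)$, and the multiplicative form $(F')^2=\tfrac{2m}{m+q}\xi_0^\sigma f^{m+q}(1+o(1))$ does not follow. This circularity (upper bounds alone cannot show discarded terms are small \emph{relative to} retained terms involving $f$) is the recurring weak point of your write-up.

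In cases (b) and (c) this is a genuine gap, not a matter of care. For (c), after integrating \eqref{SSODE} over $(\xi,\xi_0)$ the retained balance reads $\beta\xi f(\xi)\approx \int_\xi^{\xi_0}\xi_*^\sigma f^q\,d\xi_*$, both of order $(\xi_0-\xi)^{1/(1-q)}$, while the best available bound on the discarded flux, $|F'(\xi)|\le C f^q(\xi)(\xi_0-\xi)$ from \eqref{interf1}, is of \emph{exactly the same order}; to conclude $F'=o(f)$ you need $(f^{m-1})'(\xi)\to 0$ as $\xi\to\xi_0$, whereas Corollary~\ref{cor.interf} gives only $\limsup_{\xi\to\xi_0}(f^{m-1})'(\xi)=0$, i.e.\ subsequential information, leaving an oscillation scenario unexcluded (and your alternative of dividing by $f^q$ requires pointwise control of $F''/f^q$, which again presupposes a matching lower bound on $f$). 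For (b), you invoke ``two-sided bounds'' on $v=f(\xi)(\xi_0-\xi)^{-1/(1-q)}$ that have not been established --- only the upper bounds \eqref{interf2}--\eqref{interf3} are available, and a priori $v$ could oscillate or tend to $0$ --- and the step ``the only limit compatible\dots is the equilibrium $A_*$'' assumes precisely the convergence to be proved; asymptotically autonomous planar systems need not converge to equilibria. This missing two-sided control is exactly what the paper's phase-space route supplies: in \eqref{PSSyst.low} the inhomogeneous term $+1$ in the $\dot Y$ equation keeps $Y$ away from $0$, the upper bounds yield $X\to 0$ (and, when $m+q=\sigma=2$, $Z\equiv\alpha/\sqrt{m}$ is constant), and an $\omega$-limit/critical-point argument gives full convergence $Y\to-\sqrt{2/(m+q)}$, respectively to the negative root of $Y^2-\beta Y/\sqrt{m}-1=0$, which is where $K_2(\beta)$ comes from. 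As written, your parts (b) and (c) are therefore not proved; repairing them requires either an independent matching lower bound on $f$ near $\xi_0$ or the convergence statement furnished by the dynamical-systems analysis.
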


Let us notice here that the values of $K_1$, $K_2(\beta)$ and $K_3(\beta)$ in~\eqref{int.small}, \eqref{int.equal} and~\eqref{int.large} correspond to the values of $A$ obtained through the formal deduction in~\eqref{interm5}, \eqref{interm4} and~\eqref{interm3}, respectively. It is now worth pointing out that there is no explicit dependence on $\beta$ in the behavior~\eqref{int.small} when $m+q<2$. This is why we need to perform some rather serious extra work in order to identify the second order of the expansion at the interface when $m+q\in(1,2)$, as formal computations (which are rather tedious and we do not give here) reveal that $\beta$ shows up in an explicit way in this next order, a feature that will be very helpful in the proof of the uniqueness issue. More precisely, we have the following asymptotic expansions.

\begin{proposition}\label{prop.order2}
Let $m+q<2$, $\beta\in\mathcal{B}$ and set $f=f(\cdot;\beta)$ and $\xi_0=\xi_0(\beta)$. Then, as $\xi\to\xi_0$,
\begin{equation}\label{int.small2}
\begin{split}
f(\xi)&=K_1\xi_0^{\sigma/(m-q)}(\xi_0-\xi)^{2/(m-q)}\\
& \qquad - K_0(\beta)\xi_0^{(\sigma+m+q-2)/(m-q)}(\xi_0-\xi)^{(4-m-q)/(m-q)}\\
& \qquad +o((\xi_0-\xi)^{(4-m-q)/(m-q)}),
\end{split}
\end{equation}
where $K_1$ is defined in \eqref{int.small} and
\begin{equation}\label{interm10}
K_0(\beta):=\frac{(m-q)\beta K_1^{2-m}}{m(1-q)(m+q+2)}.
\end{equation}
\end{proposition}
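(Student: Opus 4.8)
The plan is to bootstrap from the first-order expansion in Proposition~\ref{prop.interf}(a) and to extract the next term by a dominant-balance analysis of~\eqref{ODE} near the interface, made rigorous through the phase-space reformulation already underlying the leading-order analysis. Throughout, write $y := \xi_0 - \xi$, $\theta := 2/(m-q)$ and $\mu := (2-m-q)/(m-q)$, so that the claimed second exponent is $\theta+\mu = (4-m-q)/(m-q)$ and Proposition~\ref{prop.interf}(a) reads $f(\xi) = C y^\theta(1+o(1))$ with $C := K_1\xi_0^{\sigma/(m-q)}$.

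First I would identify, by inspecting the orders of the five terms of~\eqref{ODE} under the ansatz $f\sim C y^\theta$, which term forces the second-order correction. Since $m\theta-2 = q\theta$, the terms $(f^m)''$ and $\xi^\sigma f^q$ balance at order $y^{q\theta}$ and reproduce the leading coefficient via $C^{m-q} = \xi_0^\sigma (m-q)^2/(2m(m+q))$. Among the remaining terms, $-\beta\xi f'\sim \beta\xi_0 C\theta\, y^{\theta-1}$ is of order $y^{\theta-1} = y^{q\theta+\mu}$, whereas $\alpha f$, $\tfrac{N-1}{\xi}(f^m)'$ and the Taylor correction of $\xi^\sigma = (\xi_0-y)^\sigma$ are all of strictly higher order. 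This is exactly where the hypotheses enter: $m>1$ gives $\mu<1$, so $y^\mu$ dominates the $O(y)$ geometric corrections, and $m>q$ forces $2(1-q)/(m-q)>\mu$, so $\alpha f$ is subdominant as well. Hence $-\beta\xi f'$ is the genuine leading perturbation and carries the only explicit $\beta$-dependence at this order, consistent with the remark following Proposition~\ref{prop.interf}.

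Next I would posit $f(\xi) = C y^\theta + D y^{\theta+\mu} + (\text{error})$, substitute into~\eqref{ODE}, and collect the coefficient of $y^{q\theta+\mu}$. The contributions are $mC^{m-1}D(m\theta+\mu)(m\theta+\mu-1)$ from $(f^m)''$, $-q\xi_0^\sigma C^{q-1}D$ from $-\xi^\sigma f^q$, and $+\beta\xi_0 C\theta$ from $-\beta\xi f'$. Using $\xi_0^\sigma = C^{m-q}m\theta(m\theta-1)$ together with the identities $m\theta+\mu-1 = \theta$ and $(m\theta+\mu)\theta - q\theta(m\theta-1) = 2(1-q)(m+q+2)/(m-q)^2$, this linear equation solves to $D = -(m-q)\beta C^{2-m}\xi_0/(m(1-q)(m+q+2))$; substituting $C = K_1\xi_0^{\sigma/(m-q)}$ and using $\sigma = \sigma_c$ to simplify the resulting power of $\xi_0$ to $(\sigma+m+q-2)/(m-q)$ then gives exactly $D = -K_0(\beta)\xi_0^{(\sigma+m+q-2)/(m-q)}$ with $K_0(\beta)$ as in~\eqref{interm10}.

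The main obstacle is turning this formal matching into a rigorous estimate on the remainder, i.e. proving that the error is genuinely $o(y^{\theta+\mu})$ and not merely consistent at that order. I would desingularize by passing to $g := f^{(m-q)/2}$, which behaves linearly ($g\sim C^{(m-q)/2} y$ with derivative bounded at the interface by Corollary~\ref{cor.interf}) and converts~\eqref{ODE} into an equation to which the upper bounds of Lemma~\ref{lem.upper.interf} apply; equivalently, following the introduction, one recasts~\eqref{ODE} as the quadratic three-dimensional autonomous system, where the interface is a hyperbolic critical point whose linearization has $\mu$ as the relevant eigenvalue ratio, so that trajectories approaching it along the distinguished direction automatically carry the $y^{\theta+\mu}$ tail. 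In either formulation the delicate point is to control the cross terms and the genuinely non-autonomous factor $\xi = \xi_0 - y$ uniformly as $y\to 0$, and to exclude resonances that could produce logarithmic corrections. A Gronwall or contraction argument on the rescaled remainder, bootstrapped from the first-order bound, then closes the estimate and yields~\eqref{int.small2}.
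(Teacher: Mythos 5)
Your formal dominant-balance computation is correct and reproduces the paper's constant: the identities $m\theta+\mu-1=\theta$ and $(m\theta+\mu)\theta-q\theta(m\theta-1)=2(1-q)(m+q+2)/(m-q)^2$ are right, the linear equation for $D$ gives $D=-(m-q)\beta C^{2-m}\xi_0/(m(1-q)(m+q+2))$, and $\sigma=\sigma_c$ indeed makes the powers of $\xi_0$ collapse to the stated exponent, so your coefficient agrees with $K_0(\beta)$ in \eqref{interm10}. You have also correctly identified that $-\beta\xi f'$ is the leading perturbation and that the rigorization should go through the autonomous phase-space system. The problem is that the step you defer --- ``a Gronwall or contraction argument on the rescaled remainder \dots then closes the estimate'' --- is not a routine epilogue: it is essentially the entire content of the paper's proof, and the mechanism you propose would fail as stated. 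The linearization of \eqref{PSSyst.low2} at the translated critical point has eigenvalues $\lambda_1=-(m-q)/\sqrt{2(m+q)}$, $\lambda_3=-(2-m-q)/\sqrt{2(m+q)}$ \emph{and} $\lambda_2=\sqrt{2(m+q)}>0$; the point is hyperbolic with a genuinely unstable direction $E_2$, so a naive forward-in-$\eta$ Gronwall or contraction estimate on the full remainder does not close --- perturbations along $E_2$ grow exponentially, and nothing in your sketch kills that mode. (Also, your worry about ``the genuinely non-autonomous factor $\xi=\xi_0-y$'' in the phase-space formulation is moot: the change of variables \eqref{change.small}--\eqref{PSvar.low} absorbs $\xi$ entirely and \eqref{PSSyst.low} is autonomous; that difficulty exists only in your direct-ODE formulation.)

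The missing idea, concretely, is the paper's use of the stable manifold theorem together with an exact algebraic relation between the phase-space variables. Since the trajectory converges to the origin of \eqref{PSSyst.low2}, it lies for large $\eta$ on the local two-dimensional stable manifold, which is a $C^2$ graph: $W=-\frac{2(N-1)+\sigma}{3m+q}X+\frac{m-1}{2+m+q}Z+h(X,Z)$ with $h(0,0)=\partial_x h(0,0)=\partial_z h(0,0)=0$. This is what eliminates the unstable mode and replaces your unproven ``exclusion of resonances'': the quadratic remainder contributes $O(X^2+Z^2)$, and the definitions \eqref{change.small} give the \emph{pointwise} identity $\mathcal{Z}=\alpha m^{(q-1)/(m-q)}\mathcal{X}^{(2-m-q)/(m-q)}$, whence $X=o(Z)$ because $(2-m-q)/(m-q)<1$; therefore $W=\frac{m-1}{2+m+q}Z+o(Z)$ with no possibility of a logarithmic correction at this order. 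Undoing the change of variables turns this into an expansion of $\big(f^{(m-q)/2}\big)'$, which is then integrated over $(\xi,\xi_0)$ and raised to the power $2/(m-q)$ to yield \eqref{int.small2}; note that in this route the $\beta$-dependence enters through $\mathcal{Z}\propto\alpha=2\beta/(m-1)$ and the $E_3$-eigenvector component $(m-1)/(2+m+q)$, and only a posteriori matches your term-by-term coefficient. Your proposal names the right critical point and even the right eigenvalue ratio, but without the stable-manifold graph and the $X=o(Z)$ relation it establishes consistency of the ansatz rather than validity of the expansion.
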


\begin{proof}
As in the proof of \cite[Proposition~4.10]{ILS22}, we introduce the new dependent variables
\begin{equation}\label{change.small}
\begin{split}
\mathcal{X}(\xi) & := \sqrt{m} \xi^{-(\sigma+2)/2} f^{(m-q)/2}(\xi), \\
\mathcal{Y}(\xi) & := \sqrt{m} \xi^{-\sigma/2} f^{(m-q-2)/2}(\xi) f'(\xi), \\
\mathcal{Z}(\xi) & := \frac{\alpha}{\sqrt{m}} \xi^{(2-\sigma)/2} f^{(2-m-q)/2}(\xi),
\end{split}
\end{equation}
as well as a new independent variable $\eta$ via the integral representation
\begin{equation}\label{PSvar.low}
\eta(\xi) := \frac{1}{\sqrt{m}} \int_0^\xi f^{(q-m)/2}(\xi_*) \xi_*^{\sigma/2} \,d\xi_*, \qquad \xi\in [0,\xi_0).
\end{equation}
Introducing $(X,Y,Z)$ defined by $(\mathcal{X},\mathcal{Y},\mathcal{Z})=(X\circ\eta,Y\circ\eta,Z\circ\eta)$, we see that $(X,Y,Z)$ solves the quadratic autonomous dynamical system
\begin{equation}\label{PSSyst.low}
\left\{
\begin{split}
	\dot{X}&= X \left[ \frac{m-q}{2} Y - \frac{\sigma+2}{2} X\right]\\[1mm]
	\dot{Y}&= - \frac{m+q}{2} Y^2 - \left( N-1+\frac{\sigma}{2} \right) XY - XZ + \frac{m-1}{2} YZ + 1\\[1mm]
	\dot{Z}&= Z \left[ \frac{2-m-q}{2}Y + \frac{2-\sigma}{2}X \right].
\end{split}
\right.
\end{equation}
Observe that, owing to~\eqref{int.small},
\begin{equation*}
	\lim_{\xi\to\xi_0} \eta(\xi) = \infty,
\end{equation*}
so that studying the behavior of $(\mathcal{X},\mathcal{Y},\mathcal{Z})(\xi)$ as $\xi\to\xi_0$ amounts to that of $(X,Y,Z)(\eta)$ as $\eta\to\infty$. Furthermore, we argue as in \cite[Proposition~4.10]{ILS22} to deduce from~\eqref{int.small} and Corollary~\ref{cor.interf} that
\begin{equation*}
	(X,Y,Z)(\eta) \in (0,\infty)\times (-\infty,0) \times (0,\infty), \qquad \eta>0,
\end{equation*}
and
\begin{equation*}
	\lim_{\eta\to\infty} (X,Y,Z)(\eta) = \left( 0 , -\sqrt{\frac{2}{m-q}} , 0 \right).
\end{equation*}
We are thus interested in the behavior near the critical point $(0,-\sqrt{2/(m+q)},0)$. We translate this point to the origin of coordinates by setting
\begin{equation}\label{trans.low}
W=Y+\sqrt{\frac{2}{m+q}}.
\end{equation}
We then find by direct calculation that the system \eqref{PSSyst.low} becomes
\begin{equation}\label{PSSyst.low2}
\left\{
\begin{split}
	\dot{X}&= -\frac{m-q}{\sqrt{2(m+q)}}X+\frac{m-q}{2}XW - \frac{\sigma+2}{2}X^2\\[1mm]
	\dot{W}&= \left(N-1+\frac{\sigma}{2}\right) \sqrt{\frac{2}{m+q}} X + \sqrt{2(m+q)}W - \frac{m-1}{\sqrt{2(m+q)}} Z\\[1mm]
           & \qquad -\left(N-1+\frac{\sigma}{2}\right)XW-XZ-\frac{m+q}{2}W^2 + \frac{m-1}{2}WZ\\[1mm]
	\dot{Z}&= -\frac{2-m-q}{\sqrt{2(m+q)}}Z+\frac{2-m-q}{2} WZ +\frac{2-\sigma}{2}XZ.
\end{split}
\right.
\end{equation}
Introducing $\mathbf{F}(\mathbf{V}) = (F_1,F_2,F_3)(\mathbf{V})$ defined for $\mathbf{V}=(V_1,V_2,V_3)\in\mathbb{R}^3$ by
\begin{align*}
	F_1(\mathbf{V}) & := -\frac{m-q}{\sqrt{2(m+q)}}V_1 +\frac{m-q}{2} V_1 V_2 - \frac{\sigma+2}{2} V_1^2\\[1mm]
	F_2(\mathbf{V}) & := \left(N-1+\frac{\sigma}{2}\right) \sqrt{\frac{2}{m+q}} V_1 + \sqrt{2(m+q)}V_2 - \frac{m-1}{\sqrt{2(m+q)}} V_3\\[1mm]
	&-\left(N-1+\frac{\sigma}{2}\right) V_1 V_2 - V_1 V_3 - \frac{m+q}{2}V_2^2 + \frac{m-1}{2}V_2 V_3\\[1mm]
	F_3(\mathbf{V}) & := -\frac{2-m-q}{\sqrt{2(m+q)}} V_3 + \frac{2-m-q}{2} V_2 V_3 +\frac{2-\sigma}{2} V_1 V_3,
\end{align*}
and denoting the semiflow associated with the dynamical system
\begin{equation}
	\dot{\mathbf{V}}(\eta) = \mathbf{F}(\mathbf{V(\eta)}), \qquad \eta>0, \qquad \mathbf{V}(0) = \mathbf{V}_0 \in \mathbb{R}^3, \label{z1}
\end{equation}
by $\varphi(\cdot;\mathbf{V}_0)$, we deduce from~\eqref{PSSyst.low2} that $\mathbf{V}_* := (X,W,Z) = \varphi(\cdot;\mathbf{V}_*(0))$ is defined on $[0,\infty)$ with
\begin{equation}
	\lim_{\eta\to\infty} \mathbf{V}_*(\eta) = 0. \label{z2}
\end{equation}
The matrix associated with the linearization of the system~\eqref{z1} at the origin is
$$
\mathcal{M}=\sqrt{\frac{2}{m+q}}
\begin{pmatrix}
	\displaystyle{-\frac{m-q}{2}} & 0 & 0 \\
	\displaystyle{N-1+\frac{\sigma}{2}} & m+q & \displaystyle{-\frac{m-1}{2}} \\
	0 & 0 & \displaystyle{-\frac{2-m-q}{2}}
\end{pmatrix}
$$
having three distinct eigenvalues
$$
\lambda_1=-\frac{m-q}{\sqrt{2(m+q)}}, \qquad \lambda_2=\sqrt{2(m+q)}, \qquad \lambda_3=-\frac{2-m-q}{\sqrt{2(m+q)}},
$$
with corresponding eigenvectors (not normalized)
$$
E_1=\left(1,-\frac{2(N-1)+\sigma}{3m+q},0\right), \qquad E_2=(0,1,0), \qquad E_3=\left(0,\frac{m-1}{2+m+q},1\right).
$$
Then $\mathbf{0}$ is an hyperbolic point of $\varphi$ and has a two-dimensional stable manifold $\mathcal{W}_s(\mathbf{0})$. According to the proof of the stable manifold theorem (see for example \cite[Theorem~19.11]{Amann}), there is an open neighborhood $\mathcal{V}$ of zero in $\real^3$, an open neighborhood $\mathcal{V}_0$ of zero in $\real^2$ and a $C^2$-smooth function $h:\mathcal{V}_0\to\real$ such that $h(0,0)=\partial_x h(0,0)=\partial_z h(0,0)=0$ and the local stable manifold
$$
\mathcal{W}_s^{\mathcal{V}}(\mathbf{0}) := \left\{ \mathbf{V}_0\in \mathcal{W}_s(\mathbf{0})\ :\ \varphi(\eta;\mathbf{V}_0)\in \mathcal{V} \; {\rm for \ all} \; \eta\ge 0 \right\}
$$
satisfies
$$
\mathcal{W}_s^{\mathcal{V}}(\mathbf{0})\subseteq\{xE_1+h(x,z)E_2+zE_3:(x,z)\in\mathcal{V}_0\},
$$
its tangent space at $\mathbf{0}$ being $\mathbb{R} E_1 \oplus \mathbb{R} E_3$. Since $\{\varphi(\eta;\mathbf{V}_*(0))\ : \ \eta\geq \eta_0\}$ is included in $\mathcal{W}_s(\mathbf{0})\cap \mathcal{V}$ for $\eta_0$ large enough by~\eqref{z2}, we conclude that $\varphi(\eta;\mathbf{V}_*(0))$ belongs to $\mathcal{W}_s^{\mathcal{V}}(\mathbf{0})$ for $\eta\ge \eta_0$. Consequently, there are functions $(\overline{x},\overline{z}):[\eta_0,\infty)\to\mathcal{V}_0$ such that
$$
(X,W,Z)(\eta) = \varphi(\eta;\mathbf{V}_*(0)) = \overline{x}(\eta)E_1 + h(\overline{x}(\eta),\overline{z}(\eta))E_2 + \overline{z}(\eta)E_3
$$
for $\eta\geq \eta_0$. In fact, $\overline{x}(\eta)=X(\eta)$, $\overline{z}(\eta)=Z(\eta)$ and
\begin{equation}\label{interm6}
W(\eta)=-\frac{2(N-1)+\sigma}{3m+q}X(\eta)+\frac{m-1}{2+m+q}Z(\eta)+h(X(\eta),Z(\eta)).
\end{equation}
Let us notice from \eqref{change.small} that
$$
\mathcal{Z}(\xi)=\alpha m^{(q-1)/(m-q)}\mathcal{X}^{(2-m-q)/(m-q)}(\xi),
$$
which implies that $X(\eta)=o(Z(\eta))$ as $\eta\to\infty$, since $(2-m-q)/(m-q)<1$. Recalling also that $h$ is $C^2$-smooth with $h(0,0)=\partial_x h(0,0)=\partial_z h(0,0)=0$, we infer from~\eqref{interm6} that
$$
W(\eta)=\frac{m-1}{2+m+q}Z(\eta)+o(Z(\eta)) \qquad {\rm as} \ \eta\to\infty,
$$
or equivalently, undoing the change of variable~\eqref{PSvar.low} and the translation~\eqref{trans.low}, we get as $\xi\to\xi_0$,
\begin{equation}\label{interm7}
\mathcal{Y}(\xi)=-\sqrt{\frac{2}{m+q}}+\frac{m-1}{2+m+q}\mathcal{Z}(\xi)+o(\mathcal{Z}(\xi)).
\end{equation}
Moreover, we readily infer from the already obtained local behavior \eqref{int.small} and the definition of $\mathcal{Z}$ in~\eqref{change.small} that, as $\xi\to\xi_0$,
$$
\mathcal{Z}(\xi)\sim\frac{\alpha}{\sqrt{m}}K_1^{(2-m-q)/2}\xi_0^{(2-\sigma)/2+\sigma(2-m-q)/2(m-q)}(\xi_0-\xi)^{(2-m-q)/(m-q)}.
$$
Inserting the previous expansion into \eqref{interm7} and recalling the definition of $\mathcal{Y}$ in~\eqref{change.small}, we find
\begin{equation*}
\begin{split}
\frac{2\sqrt{m}}{m-q} & \xi^{-\sigma/2} \left(f^{(m-q)/2}\right)'(\xi) = -\sqrt{\frac{2}{m+q}}\\
& + \frac{\alpha (m-1) K_1^{(2-m-q)/2}}{(2+m+q)\sqrt{m}} \xi_0^{(2-\sigma)/2+\sigma(2-m-q)/2(m-q)}(\xi_0-\xi)^{(2-m-q)/(m-q)}\\
&+o\left((\xi_0-\xi)^{(2-m-q)/(m-q)}\right),
\end{split}
\end{equation*}
which leads to, since $\alpha=2\beta/(m-1)$,
\begin{equation*}
\begin{split}
& \hspace{-0.25cm} \left(f^{(m-q)/2}\right)'(\xi) \\
&=-K_1^{(m-q)/2}\xi_0^{\sigma/2}\left(1-\frac{\xi_0-\xi}{\xi_0}\right)^{\sigma/2}\\
& \qquad + (1-q) K_0(\beta) K_1^{(m-q-2)/2}\xi_0^{[2(m-q)+\sigma(2-m-q)]/2(m-q)}(\xi_0-\xi)^{(2-m-q)/(m-q)}\\
& \qquad\qquad\qquad  \times \left(1-\frac{\xi_0-\xi}{\xi_0}\right)^{\sigma/2} +o\left((\xi_0-\xi)^{(2-m-q)/(m-q)}\right)\\
&=-K_1^{(m-q)/2}\xi_0^{\sigma/2}\left(1-\frac{\sigma(\xi_0-\xi)}{2\xi_0}\right)\\
& \qquad + (1-q) K_0(\beta) K_1^{(m-q-2)/2}\xi_0^{[2(m-q)+\sigma(2-m-q)]/2(m-q)}(\xi_0-\xi)^{(2-m-q)/(m-q)}\\
& \qquad +o\left((\xi_0-\xi)^{(2-m-q)/(m-q)}\right).
\end{split}
\end{equation*}
Recalling that $(2-m-q)/(m-q)<1$, we end up with
\begin{equation}\label{interm8}
\begin{split}
& \hspace{-0.25cm} \left(f^{(m-q)/2}\right)'(\xi) = -K_1^{(m-q)/2} \xi_0^{\sigma/2} \\
& \qquad + (1-q) K_0(\beta) K_1^{(m-q-2)/2}\xi_0^{[2(m-q)+\sigma(2-m-q)]/2(m-q)} (\xi_0-\xi)^{(2-m-q)/(m-q)} \\
& \qquad + o\left((\xi_0-\xi)^{(2-m-q)/(m-q)}\right).
\end{split}
\end{equation}
Integrating \eqref{interm8} over $(\xi,\xi_0)$ and then taking powers $2/(m-q)$ give
\begin{equation*}
\begin{split}
f(\xi)&=K_1\xi_0^{\sigma/(m-q)}(\xi_0-\xi)^{2/(m-q)}\\
& \qquad \times \left[1 -\frac{(m-q) K_0(\beta)}{2 K_1} \xi_0^{(m+q-2)/(m-q)} (\xi_0-\xi)^{(2-m-q)/(m-q)}\right.\\
& \qquad\qquad\qquad  \left.+o\left((\xi_0-\xi)^{(2-m-q)/(m-q)}\right)\right]^{2/(m-q)}\\
&=K_1\xi_0^{\sigma/(m-q)}(\xi_0-\xi)^{2/(m-q)} - K_0(\beta) \xi_0^{(\sigma+m+q-2)/(m-q)}(\xi_0-\xi)^{(4-m-q)/(m-q)}\\
& \qquad +o\left((\xi_0-\xi)^{(4-m-q)/(m-q)}\right),
\end{split}
\end{equation*}
as stated.
\end{proof}

\subsection{Uniqueness}\label{subsec.uniq}

We are now ready to complete the proof of Theorem~\ref{th.1} by showing that the set $\mathcal{B}$ contains at most one element. Taking into account the previous preparations, this proof borrows ideas from the analogous one in \cite[Section~4.4]{ILS22}.

\medskip

\begin{proof}[Proof of Theorem~\ref{th.1}: uniqueness]
Assume for contradiction that there are $\beta_1\in\mathcal{B}$ and $\beta_2\in\mathcal{B}$ such that $0<\beta_1<\beta_2<\infty$. By Lemma~\ref{lem.B}, we have $\xi_0(\beta_1)=\xi_1(\beta_1)$ and $\xi_0(\beta_2)=\xi_1(\beta_2)$, so that Lemma~\ref{lem.monot} implies that $f_1(\xi)>f_2(\xi)$ and $F_1(\xi)>F_2(\xi)$ for any $\xi\in \Big(0,\min\{\xi_0(\beta_1),\xi_0(\beta_2)\}\Big)$, with $f_i:=f(\cdot;\beta_i)$ and $F_i:=f_i^m$ for $i=1,2$. In particular, $\xi_0(\beta_2)<\xi_0(\beta_1)$.

As in the proof of Lemma~\ref{lem.monot}, see~\eqref{resc2}-\eqref{optimal.par}, we introduce the rescaled version $G_\lambda$ of $F_2$ defined by
\begin{equation}
	G_\lambda(\xi) := \lambda^m F_2\big(\lambda^{-(m-1)/2}\xi\big), \qquad \xi\in [0,\infty), \quad \lambda\ge 1, \label{resc2b}
\end{equation}
recalling that $F_2$ is well-defined on $[0,\infty)$ by Lemma~\ref{lem.ext}, and define the optimal parameter
\begin{equation}
	\lambda_0 := \inf\left\{ \lambda\ge 1\ :\ G_\lambda(\xi)>F_1(\xi), \ \xi\in [0,\xi_0(\beta_1)]\right\} \in (1,\infty), \label{optimal.parb}
\end{equation}
its existence being ensured by the fact that
\begin{equation*}
\begin{split}
\lim\limits_{\lambda\to\infty}\min\limits_{\xi\in[0,\xi_0(\beta_1)]}G_{\lambda}(\xi)&=\lim\limits_{\lambda\to\infty}G_{\lambda}(\xi_0(\beta_1))=
\lim\limits_{\lambda\to\infty}\lambda^mF_2(\lambda^{-(m-1)/2}\xi_0(\beta_1))\\
&\geq\lim\limits_{\lambda\to\infty}\lambda^mF_2\left(\frac{\xi_0(\beta_2)}{2}\right)=\infty.
\end{split}
\end{equation*}
According to the definition of $\lambda_0$ in \eqref{optimal.parb} and the compactness of the interval $[0,\xi_0(\beta_1)]$, we deduce that there is $\eta\in[0,\xi_0(\beta_1)]$ such that $F_1(\eta)=G_{\lambda_0}(\eta)$ and $F_1\leq G_{\lambda_0}$ on $[0,\xi_0(\beta_1)]$. Arguments very similar to the ones employed in the proof of Lemma~\ref{lem.monot}, along with Lemma~\ref{lem.ext}, then discard the possibility that either $\eta=0$ or $\eta\in(0,\xi_0(\beta_1))$, thus showing that $\eta=\xi_0(\beta_1)$. Consequently,
\begin{equation}\label{interm9}
F_1(\xi_0(\beta_1))=0=G_{\lambda_0}(\xi_0(\beta_1)), \qquad 0<F_1(\xi)<G_{\lambda_0}(\xi), \qquad \xi\in[0,\xi_0(\beta_1)),
\end{equation}
and we also obtain the following equality implied by the equality of the supports in~\eqref{interm9} and the rescaling~\eqref{resc2b}
\begin{equation}\label{supports}
\xi_0(\beta_1)=\lambda_0^{(m-1)/2}\xi_0(\beta_2).
\end{equation}
We now split the analysis into the three cases already set apart at the beginning of Section~\ref{subsec.interf}, according to the sign of $m+q-2$.

\medskip

\noindent \textbf{Case~1. $m+q<2$}. We recall that, in this case, Proposition~\ref{prop.order2} gives
\begin{equation}
\begin{split}
f_i(\xi)&=K_1\xi_0(\beta_i)^{\sigma/(m-q)}(\xi_0(\beta_i)-\xi)^{2/(m-q)}\\
& \qquad -K_0(\beta_i)\xi_0(\beta_i)^{(\sigma+m+q-2)/(m-q)}(\xi_0(\beta_i)-\xi)^{(4-m-q)/(m-q)}\\
& \qquad +o((\xi_0(\beta_i)-\xi)^{(4-m-q)/(m-q)}),
\end{split}\label{zeus}
\end{equation}
as $\xi\to\xi_0(\beta_i)$, $i=1,2$. In order to simplify the calculations, we can work at the level of $f_i$ by noticing that the rescaling~\eqref{resc2b} reduces to
\begin{equation}\label{resc3}
g_{\lambda_0}(\xi):=G_{\lambda_0}^{1/m}(\xi)=\lambda_0f_2\left(\lambda_0^{-(m-1)/2}\xi\right).
\end{equation}
We thus infer from~\eqref{zeus} and~\eqref{resc3} that
\begin{equation*}
\begin{split}
g_{\lambda_0}(\xi)&=\lambda_0K_1\xi_0(\beta_2)^{\sigma/(m-q)}\left(\xi_0(\beta_2)-\lambda_0^{-(m-1)/2}\xi\right)^{2/(m-q)}\\
& \qquad -K_0(\beta_2)\lambda_0\xi_0(\beta_2)^{(\sigma+m+q-2)/(m-q)}(\xi_0(\beta_2)-\lambda_0^{-(m-1)/2}\xi)^{(4-m-q)/(m-q)}\\
& \qquad + o\left(\left(\xi_0(\beta_2)-\lambda_0^{-(m-1)/2}\xi\right)^{(4-m-q)/(m-q)}\right)\\
&=\lambda_0K_1\left(\lambda_0^{-(m-1)/2}\xi_0(\beta_1)\right)^{\sigma/(m-q)}\lambda_0^{-(m-1)/(m-q)}(\xi_0(\beta_1)-\xi)^{2/(m-q)}\\
& \qquad -K_0(\beta_2)\lambda_0\left(\lambda_0^{-(m-1)/2}\xi_0(\beta_1)\right)^{(\sigma+m+q-2)/(m-q)}\lambda_0^{-(m-1)(4-m-q)/2(m-q)}\\
& \qquad \qquad \times(\xi_0(\beta_1)-\xi)^{(4-m-q)/(m-q)}+o\left((\xi_0(\beta_1)-\xi)^{(4-m-q)/(m-q)}\right).
\end{split}
\end{equation*}
Noticing that the powers of $\lambda_0$ appearing in the (rather tedious) previous calculations cancel out due to the precise value of $\sigma$ given in \eqref{range.exp}, we further obtain
\begin{equation*}
\begin{split}
g_{\lambda_0}(\xi)&=K_1\xi_0(\beta_1)^{\sigma/(m-q)}(\xi_0(\beta_1)-\xi)^{2/(m-q)}\\
& \qquad -K_0(\beta_2)\xi_0(\beta_1)^{(\sigma+m+q-2)/(m-q)}(\xi_0(\beta_1)-\xi)^{(4-m-q)/(m-q)}\\
& \qquad +o\left((\xi_0(\beta_1)-\xi)^{(4-m-q)/(m-q)}\right)\\
&=f_1(\xi)+(K_0(\beta_1)-K_0(\beta_2))\xi_0(\beta_1)^{(\sigma+m+q-2)/(m-q)}(\xi_0(\beta_1)-\xi)^{(4-m-q)/(m-q)}\\
&\qquad +o\left((\xi_0(\beta_1)-\xi)^{(4-m-q)/(m-q)}\right).
\end{split}
\end{equation*}
Since $\beta_1<\beta_2$, we deduce from~\eqref{interm10} that $K_0(\beta_1)<K_0(\beta_2)$. Thus $g_{\lambda_0}(\xi)<f_1(\xi)$ in a left neighborhood of $\xi_0(\beta_1)$, whence (by raising to power $m$) $G_{\lambda_0}(\xi)<F_1(\xi)$ in the same left neighborhood of $\xi_0(\beta_1)$, and we have reached a contradiction to \eqref{interm9}.

\medskip

\noindent \textbf{Case~2. $m+q=2$}. In this case,  Proposition~\ref{prop.interf}~(b) gives
\begin{equation*}
	F_i(\xi)= K_1^m \xi_0(\beta_i)^{2m/(m-q)}K_2^m(\beta_i) (\xi_0(\beta_i)-\xi)^{2m/(m-q)}+ o\Big((\xi_0(\beta_i)-\xi)^{2m/(m-q)}\Big)
\end{equation*}
as $\xi\to\xi_0(\beta_i)$, $i=1,2$. We thus have
\begin{equation*}
\begin{split}
G_{\lambda_0}(\xi) & = \lambda_0^{m} K_1^m\xi_0(\beta_2)^{2m/(m-q)}K_2^m(\beta_2)\Big(\xi_0(\beta_2)-\lambda_0^{-(m-1)/2}\xi\Big)^{2m/(m-q)} \\
	& \qquad + o\Big((\xi_0(\beta_2)-\lambda_0^{-(m-1)/2}\xi)^{2m/(m-q)}\Big) \\
	& = \lambda_0^{m}K_1^m (\lambda_0^{-(m-1)/2}\xi_0(\beta_1))^{2m/(m-q)}K_2^m(\beta_2)\lambda_0^{-m(m-1)/(m-q)}(\xi_0(\beta_1)-\xi)^{2m/(m-q)} \\
	& \qquad + o\Big((\xi_0(\beta_1)- \xi)^{2m/(m-q)}\Big) \\
	& =K_1^m \xi_0(\beta_1)^{2m/(m-q)}K_2^m(\beta_2)(\xi_0(\beta_1)-\xi)^{2m/(m-q)} \\
	& \qquad + o\Big((\xi_0(\beta_1)- \xi)^{2m/(m-q)}\Big) \\
	& =\left[ \frac{ K_2(\beta_2)}{K_2(\beta_1)}\right]^m F_1(\xi)+o\Big((\xi_0(\beta_1)- \xi)^{2m/(m-q)}\Big) ,
\end{split}
\end{equation*}
the powers of $\lambda_0$ canceling out due to $m+q=2$. Noticing that we can write
$$
K_2(\beta)=\left[\sqrt{1+\frac{\beta^2}{4m}}+\frac{\beta}{2\sqrt{m}}\right]^{-2/(m-q)},
$$
we easily observe that $K_2$ is a decreasing function of $\beta$, thus $K_2(\beta_2)<K_2(\beta_1)$ since $\beta_2>\beta_1$. Therefore, $G_{\lambda_0}(\xi)<F_1(\xi)$ in a left neighborhood of $\xi_0(\beta_1)$, which contradicts \eqref{interm9}.

\medskip

\noindent \textbf{Case~3. $m+q>2$}. We recall that, in this case, Proposition~\ref{prop.interf}~(c) gives
$$
F_i(\xi)=K_3^m(\beta_i) \xi_0(\beta_i)^{m(\sigma-1)/(1-q)} (\xi_0(\beta_i)-\xi)^{m/(1-q)} + o((\xi_0(\beta_i)-\xi)^{m/(1-q)})
$$
as $\xi\to\xi_0(\beta_i)$, $i=1,2$. Using then the rescaling~\eqref{resc2b} and the identity~\eqref{supports}, we readily infer that
\begin{equation*}
\begin{split}
G_{\lambda_0}(\xi)&=\lambda_0^m K_3^m(\beta_2) \xi_0(\beta_2)^{m(\sigma-1)/(1-q)} \left(\xi_0(\beta_2) - \lambda_0^{-(m-1)/2}\xi\right)^{m/(1-q)}\\
&\qquad +o\left(\left(\xi_0(\beta_2)-\lambda_0^{-(m-1)/2}\xi\right)^{m/(1-q)}\right)\\
&=\lambda_0^m K_3^m(\beta_2) \left(\lambda_0^{-(m-1)/2}\xi_0(\beta_1)\right)^{m(\sigma-1)/(1-q)} \lambda_0^{-(m-1)m/2(1-q)}(\xi_0(\beta_1)-\xi)^{m/(1-q)}\\
&\qquad +o\left((\xi_0(\beta_1)-\xi)^{m/(1-q)}\right)\\
&=K_3^m(\beta_2) \xi_0(\beta_1)^{m(\sigma-1)/(1-q)} (\xi_0(\beta_1)-\xi)^{m/(1-q)} + o((\xi_0(\beta_1)-\xi)^{m/(1-q)})\\
&=\left[\frac{K_3(\beta_2)}{K_3(\beta_1)}\right]^m F_1(\xi)+o((\xi_0(\beta_1)-\xi)^{m/(1-q)}).
\end{split}
\end{equation*}
Since $K_3(\beta_2)<K_3(\beta_1)$ for $\beta_2>\beta_1$, we find that $G_{\lambda_0}(\xi)<F_1(\xi)$ in a left neighborhood of $\xi_0(\beta_1)$, which is again a contradiction to \eqref{interm9}.

\medskip

The previous contradictions imply that there cannot be two different values of the exponent $\beta$ in the set $\mathcal{B}$, completing the proof.
\end{proof}

\bigskip

\noindent \textbf{Acknowledgements} This work is partially supported by the Spanish project PID2020-115273GB-I00 and by the Grant RED2022-134301-T (Spain). Part of this work has been developed during visits of R. G. I. to Institut de Math\'ematiques de Toulouse and to Laboratoire de Math\'ematiques LAMA, Universit\'e de Savoie, and of Ph. L. to Universidad Rey Juan Carlos, and both authors thank these institutions for hospitality and support. The authors wish to thank Ariel S\'anchez (Universidad Rey Juan Carlos) for interesting comments and suggestions.

\bibliographystyle{plain}

\end{document}